\newtheorem{teo}{Theorem}
\newtheorem{prop}[teo]{Proposition}
\newtheorem{lem}[teo]{Lemma}
\theoremstyle{remark}
\newtheorem{remark}[teo]{Remark}
\newcommand{\C}{\mathbb{C}} 
\newcommand{\R}{\mathbb{R}} 
\newcommand{\Q}{\mathbb{Q}} 
\newcommand{\Z}{\mathbb{Z}} 
\newcommand{\G}{\mathbb{G}} 
\newcommand{\A}{\mathbb{A}} 
\newcommand{\K}{k} 
\newcommand{\Ok}{\mathcal{O}_\K} 
\newcommand{\idk}{\mathcal{I}} 
\newcommand{\ncl}{h_{\K}} 
\newcommand{\plk}{{\Omega_k}} 
\newcommand{\plf}{{\Omega_f}} 
\newcommand{\pli}{{\Omega_{\infty}}} 
\newcommand{\norm}{N} 
\newcommand{\F}{\mathbb{F}} 
\newcommand{\rOk}{\Ok^{\times}} 
\newcommand{\aid}{\mathfrak{a}} 
\newcommand{\p}{\mathfrak{p}} 
\newcommand{\ua}{\underline{\mathfrak{c}}} 
\newcommand{\clrep}{\mathcal{C}} 
\newcommand{\ida}{\mathfrak{c}} 
\newcommand{\ud}{\underline{\mathfrak{d}}} 
\newcommand{\uo}{\underline{\mathfrak{o}}} 
\newcommand{\oid}{\mathfrak{o}} 
\newcommand{\did}{\mathfrak{d}} 
\newcommand{\bid}{\mathfrak{b}} 
\newcommand{\ub}{\underline{\mathfrak{b}}} 
\newcommand{\idcoord}{\ua_{x_\rho}} 
\newcommand{\uidcoord}{\ua_{\ux}} 
\newcommand{\ux}{\underline{x}} 
\newcommand{\uyi}{\underline y} 
\newcommand{\yi}{y} 
\newcommand{\setsum}{S(\B)} 
\newcommand{\X}{X} 
\newcommand{\torus}{T} 
\newcommand{\latt}{T_{\ast}} 
\newcommand{\lattR}{T_{\ast\R}}
\newcommand{\fan}{\Sigma} 
\newcommand{\ray}{\Sigma(1)} 
\newcommand{\rminus}{S} 
\newcommand{\maxc}{\Sigma_{max}} 
\newcommand{\torsor}{\mathcal{T}} 
\newcommand{\tortor}{\Lambda} 
\newcommand{\lattor}{\Lambda_{\ast}} 
\newcommand{\drho}{{D_{\rho}}} 
\newcommand{\dtrho}{{D_{\tilde\rho}}}
\newcommand{\usigma}{{D_\sigma}} 
\newcommand{\ns}{T_{NS}} 
\newcommand{\deq}{u} 
\newcommand{\can}{{-K}} 
\newcommand{\asrho}{{\alpha_{\sigma,\rho}}} 
\newcommand{\astrho}{{\alpha_{\sigma,\tilde\rho}}} 
\newcommand{\wtX}{\widetilde{X}}
\newcommand{\wttorsor}{\widetilde\torsor}
\newcommand{\awttorsor}{{}_{\ua}\widetilde\torsor} 
\newcommand{\awpi}{{}_{\ua}\tilde\pi} 
\newcommand{\hX}{H} 
\newcommand{\ma}{m} 
\newcommand{\na}{n} 
\newcommand{\hsup}{h} 
\newcommand{\mob}{\mu}
\newcommand{\partsum}{\sum_{\ud,\p}}
\newcommand{\exponent}{e}
\newcommand{\bb}{b} 
\newcommand{\bbb}{b'} 
\newcommand{\card}{N_{\torus,\hX,\K}} 
\newcommand{\asymptconst}{C_{\X,\hX,\K}} 
\newcommand{\f}{f} 
\newcommand{\ka}{\kappa} 
\newcommand{\B}{B} 
\newcommand{\crOk}{w_k} 
\newcommand{\ii}{\infty} 
\newcommand{\truedisck}{\Delta_k} 
\newcommand{\disck}{|\truedisck|} 
\newcommand{\pey}{\alpha} 
\newcommand{\firstmeasure}{\overline\Theta^1}
\newcommand{\secondmeasure}{\Theta_\ii}
\newcommand{\adeles}{\mathbf{A}_\K}
\newcommand{\borel}{m_\ii} 
\newcommand{\nuintegers}{\mathcal{O}_\nu} 
\newcommand{\Aa}{A_{\ua,\uo}}
\newcommand{\Aad}{A_{\ua,\ud}}
\newcommand{\Ca}{C_{\ua}}
\newcommand{\nAad}{\#\Aad}
\newcommand{\nCa}{\#\Ca}
\newcommand{\darho}{\delta_{\ua,\ud,\tilde\rho}}
\newcommand{\darhos}{\delta_{\sigma}}
\newcommand{\e}{e} 
\newcommand{\ee}{\epsilon} 
\newcommand{\constlemma}{D}
\newcommand{\Sum}{S} 
\newcommand{\constantAa}{C_2} 
\newcommand{\constt}{C} 
\newcommand{\boundnumbersigma}{A}
\newcommand{\constant}{C} 
\newcommand{\constantO}{C_0} 
\newcommand{\constRi}{C} 
\newcommand{\constantpartdarho}{C_1} 
\newcommand{\constB}{B} 
\newcommand{\Da}{{D_{\ua}}} 
\newcommand{\Das}{{D_{\ua,\sigma}}} 
\newcommand{\Dasp}{{D_{\ua,\sigma'}}} 
\newcommand{\DasP}{{D'_{\ua,\sigma}}} 
\newcommand{\daux}{\,\mathrm{d}_{\ua,\uo}\ux} 
\newcommand{\dadux}{\,\mathrm{d}_{\ua,\ud}\ux} 
\newcommand{\partitionfunction}{\mathcal{L}} 
\newcommand{\Csigma}{C_{\sigma}} 
\newcommand\dd{\,\mathrm{d}} 
\newcommand{\lattice}{L} 
\newcommand{\vll}{v} 
\newcommand{\wll}{w} 
\newcommand{\ull}{u} 
\newcommand{\rll}{\eta} 
\newcommand{\thll}{\theta} 
\newcommand{\Fund}{F} 
\newcommand{\Funi}{F_\ui} 
\newcommand{\gi}{\gamma_\ui} 
\newcommand{\g}{\gamma} 
\newcommand{\NN}{I} 
\newcommand{\ui}{{\underline{i}}} 
\newcommand{\Li}{L_\ui} 
\newcommand{\Aadi}{A_{\ui}}
\newcommand{\NaBid}{\#\Aadi} 
\newcommand{\SBi}{D_{\ui}} 
\newcommand{\boundarySBi}{\tilde{D}_{\ui}} 
\newcommand{\Ris}{L_{\ui,\rho}} 
\newcommand{\Ri}{\Ris(\B)} 
\newcommand{\Iid}{I_{\ui}} 
\newcommand{\Fis}{S_{\ui}} 
\newcommand{\FiB}{\Fis(\B)} 
\DeclareMathOperator{\pic}{Pic}
\DeclareMathOperator{\spec}{Spec}
\DeclareMathOperator{\cox}{Cox}
\DeclareMathOperator{\Hom}{Hom}
\begin{document}

\title[Imaginary quadratic points on toric varieties]{Imaginary quadratic points on toric varieties via universal torsors}

\author{Marta Pieropan} 

\address{Institut f\"ur Algebra, Zahlentheorie und Diskrete
  Mathematik, Leibniz Universit\"at Hannover,
  Welfengarten 1, 30167 Hannover, Germany}

\email{pieropan@math.uni-hannover.de}

\date{\today}

\setcounter{tocdepth}{1}

\begin{abstract}
Inspired by a paper of Salberger we give a new proof of Manin's conjecture for toric varieties over imaginary quadratic number fields by means of  universal torsor parameterizations and elementary lattice point counting. 
\end{abstract}

\keywords{imaginary quadratic fields, rational points, toric varieties}
\subjclass[2010]{11G35 (14G05, 14M25)}

\maketitle

\tableofcontents

\section{Introduction}
Let $\K$ be a number field. 
Around 1989, Manin predicted an asymptotic formula for the distribution of rational points on Fano varieties over $k$: there exists an open subset $U$ of the variety $X$ such that
\begin{equation}\label{Manin'sconjecture}
\#\{x\in U:H(x)\leq\B\}\sim C_{X,H,\K}\B(\log\B)^{r-1},
\end{equation}
where $H:X(\K)\to\R_{\geq0}$  is the height function defined by an adelic metric on the anticanonical sheaf, $r$ is the rank of the Picard group of $X$, and $C_{X,H,\K}$ is a constant independent of $\B$ \cite{MR974910}. 
The constant $C_{X,H,\K}$ depends on $X$, $H$ and $\K$, and has been interpreted in terms of Tamagawa numbers by Peyre \cite{MR1340296, MR2019019}.

The asymptotic formula \eqref{Manin'sconjecture} above does not hold for all Fano varieties (cf.~\cite{MR1401626}), but has been verified for some families of varieties and many specific examples. 
The proofs of the known cases make use of various techniques: the Hardy-Littlewood circle method applies to complete intersections whose dimension is large compared to the degree (e.g.~\cite{MR0150129, MR1340296}), harmonic analysis on adelic points works for certain equivariant compactifications of algebraic groups (e.g.~\cite{MR1620682, MR1906155}), other proofs employ torsor parameterizations and lattice point counting. 
This last idea goes under the name of ``the universal torsor method''. It was introduced  by Salberger, who proved Manin's conjecture for 
complete smooth split toric varieties over $\Q$ with anticanonical sheaf generated by global sections \cite{MR1679841}, and was then applied to many other varieties over $\Q$: for example, some (singular) del Pezzo surfaces  (e.g.~\cite{MR1909606, MR2373960, MR2332351, MR3100953}) and Ch\^atelet surfaces (e.g.~\cite{MR2874644}). 

The circle method and harmonic analysis work for families of varieties over arbitrary number fields, but require that the varieties satisfy very special properties. 
On the other hand, the universal torsor method applies to a wider range of varieties, but has been handled mainly over $\Q$. 
Only recently, a generalization of this method to other number fields was started 
\cite{MR3077170,  MR3107569, DF1MR3269462, DF2MR3181632, DF3arXiv:1311.2809, arXiv:1312.6603}.

\bigskip

Universal torsors  were introduced and studied by Colliot-Th\'el\`ene and Sansuc \cite{MR0414556, MR899402} to investigate the rational points on geometrically rational vartieties.
A universal torsor of $X$ is a geometric quotient $\pi:\torsor\to X$ of $\torsor$ by 
the N\'eron-Severi torus $\ns$ of $X$, which is the group of $\K$-characters of the geometric Picard group $\pic(\overline X)$. For a complete variety with $\pic(\overline X)$ free and split (i.e.~with trivial Galois action) of rank $r$ and with
a proper model $\wtX$ over the ring of integers $\Ok$ of $\K$, the set of rational points $\X(\K)$ coincides with the set of integral points $\wtX(\Ok)$ and is parameterized by the integral points on the universal torsors $\tilde\pi:\wttorsor\to\wtX$ of $\wtX$. The fiber of $\tilde\pi$ over each $\Ok$-point is either empty or isomorphic to $(\Ok^\times)^r$, hence counting rational points on $X$ is the same as counting integral points on the torsors $\wttorsor$ 
modulo the action of $(\Ok^\times)^r$. 

There are at least two reasons why so far the universal torsor method has been applied mainly over $\Q$. Firstly, because over $\Z$ there is just one isomorphism class of torsors $\wttorsor$ in the split case, while over number fields of class number greater than 1 one has usually to count lattice points on more than one torsor $\wttorsor$. Secondly, because the fibers of a universal torsor morphism over $\Z$ are finite of the same cardinality $\#(\Z^\times)^r=2^r$. Hence, the number of points on a subset of $\wtX$ is obtained by counting the number of points on its preimage under $\tilde\pi$ and then dividing by $2^r$. A similar trick works if $\K$ is an imaginary quadratic field. This explains why the first applications of the universal torsor method to number fields beyond $\Q$ \cite{MR3077170, DF1MR3269462, DF2MR3181632, DF3arXiv:1311.2809} consider imaginary quadratic fields. 

If $\Ok^\times$ is not finite, one needs to work with a fundamental domain for the action of $(\Ok^\times)^r$ on $\wttorsor(\Ok)$ (see \cite{MR3107569}). This leads sometimes to count lattice points in unbounded regions that require refined techniques \cite{arXiv:1312.6603}.
This last paper
offers a systematic and explicit way to produce parameterizations in terms of lattice points on twisted universal torsors for varieties over number fields with arbitrary class number and applies it to a specific singular del Pezzo surface of degree 4. 

\bigskip

Toric varieties are a classical topic in the literature about Manin's conjecture. 
We recall that Schanuel's asymptotic formula for the number of points of bounded height on projective spaces of any dimension over arbitrary number fields \cite{MR557080} inspired Manin's conjecture \cite{MR974910}.
Smooth projective toric varieties have been worked out via harmonic analysis over arbitrary number fields \cite{MR1620682} and over global fields of positive characteristic \cite{MR2809202}. Salberger considered 
complete smooth split toric varieties over $\Q$ with anticanonical sheaf generated by global sections 
to test the universal torsor parameterization and other innovations in his remarkable paper \cite{MR1679841}, the error term in his result was then improved by de la Bret\`eche in \cite{MR1824152} by means of the precise estimations of multiple sums of arithmetic functions he developed in \cite{MR1858338}.

In our paper we generalize Salberger's proof to imaginary quadratic fields. 
It is the first proof of Manin's conjecture via the universal torsor method for a family of varieties over number fields beyond $\Q$. Moreover, like Salberger's and de la Bret\`eche's results, our main theorem below covers some complete non-projective toric varieties (for example Oda's threefold \cite[\S1.9.4]{MR546291}) that have been neglected by \cite{MR1620682} and \cite{MR2809202}.

\bigskip

The main result of this paper is the following theorem.

\begin{teo}\label{maintheo}
Let $\X$ be a smooth equivariant compactification of a split torus $\torus$ over an imaginary quadratic number field $\K$.
Assume that the anticanonical sheaf of $\X$ is generated by its global sections. Let $\hX$ be the toric anticanonical height function on $\X(k)$ defined in \cite[\S10]{MR1679841}, and let $\card(\B)$ be the number of $\K$-rational points on $\torus$ of toric anticanonical height at most $\B$. Then, for all $\varepsilon>0$,
\begin{equation*} \label{result}
\card(\B)=\asymptconst\B(\log\B)^{r-1} +O(\B(1+\log\B)^{r-2+1/f+\varepsilon}),
\end{equation*}
where $r$ is the rank of the Picard group of $\X$, $\f$ is the smallest positive integer such that there exist $\f$ rays of the fan $\fan$ defining $\X$ not contained in a cone of $\fan$, and $\asymptconst$ is the constant predicted by Peyre in \cite{MR1340296}.
\end{teo}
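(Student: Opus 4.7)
The plan is to adapt Salberger's proof over $\Q$ to the imaginary quadratic case in three stages: parameterize $\torus(\K)$ by lattice points on a finite disjoint union of twisted universal torsors, reduce the count to a multiple sum of lattice points in explicit regions carved out by the height and by coprimality constraints, and extract Peyre's constant as the leading term while absorbing the rest into the prescribed error.

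For the first stage, since $\X$ is smooth and split, the Cox construction identifies the universal torsor with the complement in $\A^{\#\ray}$ of the subvariety whose defining ideal is generated by the monomials $\prod_{\rho\notin\sigma}x_\rho$ for $\sigma\in\maxc$; equivalently, a $\K$-point $\ux=(x_\rho)$ lifts iff the rays at which $x_\rho$ vanishes span a cone of $\fan$. Passing to $\Ok$-models and allowing $\ncl>1$, the set $\wtX(\Ok)$ is covered by a disjoint union of twisted integral torsors $\awttorsor$ indexed by tuples $\ua=(\ida_\rho)_\rho$ of representatives of fractional ideal classes, one per ray, and the torus $(\rOk)^r$ acts on each fiber through a finite group of order $\crOk^r$ because $\K$ is imaginary quadratic. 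Choosing a fundamental domain for this action I would obtain a formula of the shape
\[
\card(\B)=\frac{1}{\crOk^r}\sum_{\ua}\#\bigl\{\ux\in\awttorsor(\Ok):\hsup(\ux)\leq\B,\ \ux\text{ coprime}\bigr\},
\]
where the coprimality condition is precisely that for every prime $\p$ of $\Ok$ the set $\{\rho:\p\mid x_\rho\}$ lies in a cone of $\fan$.

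For the second stage, Möbius inversion on this condition introduces a sum over tuples $\ud=(\did_\rho)_\rho$ of ideal divisors, after which one must count points of the lattice $\prod_\rho\ida_\rho\did_\rho\subset(\Ok\otimes\R)^{\#\ray}\simeq\C^{\#\ray}$ inside the region cut out by $\hsup\leq\B$. Since $\hsup$ is the product of local factors expressible in Cox coordinates, this region has piecewise smooth boundary controlled by the combinatorics of $\fan$, so a Davenport-style lattice point lemma for $\Z^2$-lattices in $\C^{\#\ray}$ replaces each count by volume divided by covolume, with an error governed by a tubular neighborhood of the boundary and a scaling factor in $\prod_\rho\norm(\ida_\rho\did_\rho)$.

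In the third stage, summing the main volumes against Möbius yields a convergent Euler product which, together with the outer sum over $\ua$ and the factor $\crOk^{-r}$, reassembles into the Tamagawa constant $\asymptconst$; a Tauberian extraction then produces the leading term $\asymptconst\B(\log\B)^{r-1}$. The main obstacle will be uniform control of the error term in the multiple sum over $\ud$: the exponent $r-2+1/\f+\varepsilon$ reflects the fact that the worst behaviour of the lattice point error occurs exactly when $\f$ of the divisors $\did_\rho$ are constrained simultaneously, the critical case corresponding to minimal non-faces of $\fan$. Handling it requires splitting the $\ud$-sum according to whether $\prod_\rho\norm(\ida_\rho\did_\rho)$ is small or large compared to $\B$ and summing the two ranges by a Rankin-type argument combined with estimates for multiple divisor sums over $\Ok$. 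Everything else is either formal or follows Salberger's template; this book-keeping is the technical heart of the proof.
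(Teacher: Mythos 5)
Your three-stage outline is exactly the paper's: the twisted-torsor parameterization and the resulting counting formula correspond to Propositions \ref{cardexpr} and \ref{CaB}, the volume computation to Proposition \ref{integralevaluation}, and the Euler product/Tamagawa verification to Proposition \ref{prop:peyre_constant}. (One small slip: the twists are indexed by $\ua=(\ida_1,\dots,\ida_r)\in\clrep^r$, one fractional ideal per generator of $\pic(\X)$, not one per ray; the lattice for the coordinate $x_\rho$ is then $\did_\rho\ua^{\drho}$.) The genuine gap is in the middle step, where you invoke ``a Davenport-style lattice point lemma'' and a ``tubular neighborhood of the boundary.'' That is not what the paper does, and a generic tubular-neighborhood estimate would not give you the claimed exponent.

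The per-$\ud$ comparison actually proved (Proposition \ref{lemnAaBint}) is
\begin{equation*}
\Bigl|\nAad(\B)-\int_{\Da(\B)}\dadux\Bigr|\leq\constantAa\,\norm(\ud)^{-1}\B(1+\log\B)^{r-2}\min\{\norm(\ud),1+\log\B\},
\end{equation*}
and the $\min$ is essential, not cosmetic. With only the branch $\norm(\ud)^{-1}\B(\log\B)^{r-1}$ --- the natural volume-over-covolume scaling of a boundary error, which is what your sketch supplies --- the sum against $|\mob(\ud)|$ contributes $O(\B(\log\B)^{r-1})$, the same order as the main term, so you prove nothing. The $\ud$-independent branch $\B(\log\B)^{r-2}$ alone is not summable, since $\sum_{\ud}|\mob(\ud)|$ diverges. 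Only with both branches, and with the split placed at $\norm(\ud)\asymp 1+\log\B$ (not at $\B$, as your Rankin-type suggestion has it), do the partial-sum estimates $\sum_{\norm(\ud)\leq\bb}|\mob(\ud)|=O(\bb^{1/f+\varepsilon})$ and $\sum_{\norm(\ud)\geq\bb}|\mob(\ud)|/\norm(\ud)=O(\bb^{1/f-1+\varepsilon})$ of Proposition \ref{mobius_function} yield the exponent $r-2+1/\f+\varepsilon$. To obtain the $\log$-saving uniformly in $\ud$, the paper partitions each factor $\C\cong\R^2$ into six cones generated by a reduced basis of the ideal lattice (Lemma \ref{idlatt_partition}), chosen so that the sum of the two generators of a cone is longer than either generator (Lemma \ref{latticesliceconsequence}); this identifies exactly which lattice points have a translated fundamental parallelotope straddling the height boundary (Proposition \ref{partdarho}), and that count is then handled by a Salberger-style elementary divisor estimate (Lemma \ref{counting_lemma} together with \cite[Sublemma 11.24]{MR1679841}). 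That cone construction and the resulting two-branch per-$\ud$ estimate are the technical heart missing from your sketch.
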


The constant $\asymptconst$ is the product
\begin{equation}\label{eq:toric_asymptconst}
\pey(\X)\ka\disck^{-N/2}\ncl^r\crOk^{-r}(2\pi)^N\#\maxc,
\end{equation}
where 
$\pey(\X)$ is the volume of a certain polytope in the dual of the effective cone of $\X$ defined in \cite[\S2]{MR1340296} (cf.~\cite[\S7]{MR1679841}),
$\ka$ is a product of non-archimedean local densities (cf.~\eqref{kappa} and Proposition \ref{prop:peyre_constant}),
$\truedisck$ is the discriminant of $\K$,
$\ncl$ is the class number of $\K$,  
$\crOk$ is the number of roots of 1 contained in $\K$, and $N$ and
$\#\maxc$ are the number of rays and  the number of maximal cones, respectively, in the fan defining $\X$.

This paper is organized as follows.
In Section \ref{sec:toric_varieties} 
we recall the Cox rings of split toric varieties, which are polynomial rings by \cite{MR1299003}, and we describe the the associated universal torsors, which are  open subsets of affine spaces whose complements are defined by monomial equations.
These equations are determined by the combinatorics of the toric varieties, and turn into coprimality conditions on the affine coordinates of the integral points on the twisted universal torsors that we use to parameterize rational points via lattice points. 
The anticanonical height function $\hX$ defined in \cite{MR1679841} is the
pullback of the Weil height
\begin{equation*}
H_{\mathbb{P}^n}:\mathbb{P}^n(\K)\to\R_{\geq0},\quad (x_0:\dots:x_n)\mapsto\prod_{\nu\in\plk}\max_{0\leq i\leq n}|x_i|_\nu,
\end{equation*}
where $\plk$ is the set of places of $\K$ and $|\cdot|_\nu$ are the absolute values on the completions $\K_\nu$ of $\K$ normalized as in \S \ref{notation}, under a morphism $\X\to\mathbb{P}^n_k$ defined by the anticanonical sheaf.
We prove that the height of the integral points on the twisted universal torsors can be expressed in terms of the coordinates of the affine spaces containing the torsors as a product involving just the archimedean places (cf.~Proposition \ref{height2}).

In Section \ref{sec:moebius_inversion} we study a multiplicative function, introduced in \cite{MR1340296} and used also in \cite{MR1679841}, attached to the characteristic function of the set of integral coordinates that belong to the universal torsor, and we  perform M\"obius inversion to get rid of the coprimality conditions. Thus, we reduce to count lattice points in bounded subsets $D(\B)$ of an $\R$-vector space $\C^N\cong\R^{2N}$.

Section \ref{sec:proof_main_theorem} contains the proof of Theorem \ref{maintheo}. 
Since the height function depends only on the absolute value of the coordinates, we partition $\C^N$ in strongly convex rational polyhedral cones, each of them spanned by a fundamental domain of the lattice which is a parallelotope $F$ 
with the property that, for every lattice point $x$ in the cone, $x+F$ intersects the boundary of the region $D(\B)$ defined by the height function if and only if the lifted height of $x$ is at most $\B$ and the height of $x+\gamma$ is strictly greater than $\B$, where $\gamma$ is the long diagonal of the parallelotope (cf.~Subsection \ref{subsec:lattices}).  
For each cone $C$, we compare the number of lattice points in $C\cap D(\B)$ with the volume of the region $C\cap D(\B)$ (cf.~Proposition \ref{lemnAaBint}), and we estimate the difference by counting the number of lattice points $x\in C\cap D(\B)$ such that $x+F\nsubseteq C\cap D(\B)$ (cf.~Proposition \ref{partdarho}).

In the last section we show that the constant $\asymptconst$ obtained in Section \ref{sec:proof_main_theorem} and described above verifies Peyre's conjecture \cite[Conjecture 2.3.1]{MR1340296}. 

The results of Sections \ref{sec:toric_varieties} and \ref{sec:moebius_inversion} up to \S \ref{subsec:moebius_inversion_imaginary_quadratic} hold for arbitrary number fields. Then we restrict to imaginary quadratic fields, because a generalization of the proofs in \cite{MR557080, MR3107569} does not seem to be straightforward.

\subsection{Notation}\label{notation}
Let $\K$ be a number field. Let $\plk$ be the set of places of $\K$, $\plf$ the set of finite places of $\K$ and $\pli$ the set of infinite places of $\K$. For every $\nu\in\plk$, we denote by $\K_{\nu}$ the completion of $\K$ at the place $\nu$. For every $\nu\in\plf$, if $\p$ is the prime ideal of $\Ok$ corresponding to $\nu$, we denote by $\F_{\p}\cong\F_{\nu}=\Ok/\p$ the residue field of $\K_{\nu}$.

For every $\nu\in\plk$, let $|\cdot|_{\nu}:\K_{\nu}\to\R_{\geq0}$ be the absolute value of $\K_{\nu}$ normalized as follows: if $\tilde\nu$ is the place of $\Q$ below $\nu$ and $\Q_{\tilde\nu}$ is the completion of $\Q$ at $\tilde\nu$, then $|\cdot|_{\nu}:=|\norm_{\K_\nu/\Q_{\tilde\nu}}(\cdot)|_{\tilde\nu}$, where $|\cdot|_{\tilde\nu}$ is the usual real or $p$-adic absolute value on $\Q_{\tilde\nu}$.

For every nonzero fractional ideal $\aid$ of $\K$, we denote by $\norm(\aid)\in\Q$ its absolute norm.

If $L_1,\dots,L_n$ are lattices (or fundamental domains of lattices) in $\C\cong\R^2$, we denote by $\bigoplus_{i=1}^nL_i$ the lattice (respectively, the fundamental domain) in $\C^n\cong\R^{2n}$ obtained as direct sum of the $L_i$.

\section{Universal torsors and heights on toric varieties}\label{sec:toric_varieties}

\subsection{Parameterization via universal torsors}

Let $\X$ be a smooth complete toric variety over a number field $\K$ that is an equivariant compactification of a split torus $\torus$. We denote by $\latt$ the lattice of cocharacters of $\torus$ and by $\fan$ the fan in $\lattR:=\latt\otimes_{\Z}\R$ that defines $\X$. 

By \cite[\S4]{MR0414556} and \cite[\S8]{MR1679841}, we know that $\X$ has a universal torsor $\pi:\torsor\to\X$, which is unique up to isomorphism and can be realized as the the toric variety defined by the pullback of $\fan$ under the morphism of lattices \begin{equation*}
\lattor\to\latt,\quad \rho\mapsto n_\rho,
\end{equation*}
where $n_\rho$ is the unique generator of $\rho\cap\latt$, $\lattor:=\bigoplus_{\rho\in\ray}\Z\rho$ and $\ray$ denotes the set of rays of $\fan$. We denote by  $\tortor=\pi^{-1}(\torus)$ the torus contained in $\torsor$ whose group of cocharacters is $\lattor$.
By \cite{MR1299003} the Cox ring of $\X$ is a $\pic(\X)$-graded polynomial ring in $N:=\#\ray$ variables:
\begin{equation*}\cox(X)=k[x_{\rho}:\rho\in\ray],
\end{equation*}
 where $\ray$ is the set of rays of $\fan$, and the degree of the variable $x_\rho$ is the class of the prime toric invariant divisor $\drho$ corresponding to the ray $\rho$.
 The Cox ring of $\X$ is the ring of global sections of the structure sheaf of $\torsor$ (see \cite[\S 6.1]{arXiv:1003.4229} and \cite[\S3.2]{arXiv:1408.5358}). Moreover,
 there is an open embedding  $\torsor\to\A^N_\K:=\spec\cox(\X)$  whose complement is the closed subset defined by the monomials \begin{equation*}
 \ux^{\usigma}:=\prod_{\rho\in\ray\smallsetminus\sigma(1)}x_{\rho}, \quad \sigma\in\maxc,
 \end{equation*} 
where $\maxc$ is the set of maximal cones of $\fan$, and $\sigma(1)$ is the set of rays of $\fan$ contained in the cone $\sigma$ (see \cite[\S8]{MR1679841}).

Let $\Ok$ be the ring of integers of $\K$.
By \cite[Remark 8.6]{MR1679841} (cf.~\cite[Theorem 3.3]{arXiv:1312.6603}), the universal torsor $\torsor\to\X$ admits an  $\Ok$-model $\wttorsor\to\wtX$, which is a universal torsor of the toric $\Ok$-scheme $\wtX$ defined by the fan $\fan$, and 
\begin{equation}\wttorsor\cong\A^N_{\Ok}\smallsetminus V(\ux^{\usigma}:\sigma\in\maxc). \label{eq:open_subset}
\end{equation}

 Fix a basis $\ell_1,\dots,\ell_r$ of $\pic(\X)$ (namely, an isomorphism $\pic(\X)\cong\Z^r$).
 Let $\clrep$ be a system of representatives for the class group of $k$.
By \cite[Theorem 2.7]{arXiv:1312.6603}, the universal torsor $\wttorsor\to\wtX$ induces the following parameterization of the set of rational points of $\X$:
\begin{equation}\label{eq:param}
\X(\K)=\bigsqcup_{\ua\in\clrep^r}\awpi(\awttorsor(\Ok)),
\end{equation}
where  $\awpi:\awttorsor\to\wtX$ is the twist of $\wttorsor\to\wtX$ defined in \cite[Definition 2.6]{arXiv:1312.6603}. 
The parameterization can be made explicit as follows.
 For every $\ua=(\ida_1,\dots,\ida_r)\in\clrep^r$ and every divisor $D$ of $\X$, let $\ua^D:=\prod_{i=1}^r\ida_i^{a_i}$, where $a_1,\dots,a_r\in\Z$ are determined by the equality $[D]=\sum_{i=1}^ra_i\ell_i$ in $\pic(\X)$. 
 For every $\ux=(x_\rho)_{\rho\in\ray}\in\K^N$ and $\ua\in\clrep^r$, let $\uidcoord=(\idcoord)_{\rho\in\ray}$ be defined by $\idcoord:=x_\rho\ua^{-\drho}$ for all $\rho\in\ray$.
 For every $\sigma\in\maxc$, let $\usigma:=\sum_{\rho\in\ray\smallsetminus\sigma(1)}\drho$ and $\uidcoord^{\usigma}:=\prod_{\rho\in\ray\smallsetminus\sigma(1)}\idcoord$.
With this notation,
\begin{equation}\label{eq:integral_points_twisted_torsor}
\awttorsor(\Ok)=\{\ux\in\bigoplus_{\rho\in\ray}\ua^\drho: \sum_{\sigma\in\maxc}\uidcoord^{\usigma}=\Ok\}\subseteq\K^N
\end{equation}
for all  $\ua\in\clrep^r$, by \cite[Theorem 2.7]{arXiv:1312.6603} (cf.~\cite[p.~15]{MR1650339}).

\subsection{Anticanonical height function}\label{subsection_height}
We recall the anticanonical height function defined by Salberger in \cite[\S10]{MR1679841} and list some of its properties. We need first some notation.

For every torus invariant divisor $D=\sum_{\rho\in\ray}a_\rho\drho$ of $\X$ and for every $\sigma\in\maxc$, let $\deq_{\sigma,D}$ be the character of $\torus$ determined by $\deq_{\sigma,D}(n_\rho)=a_\rho$ for all $\rho\in\sigma(1)$, and define $D(\sigma):=D-\sum_{\rho\in\ray}\deq_{\sigma,D}(n_\rho)\drho$.

We denote by $\can$ the anticanonical divisor $\sum_{\rho\in\ray}\drho$ of $\X$. 
For every $\sigma\in\maxc$ and  $\rho\in\ray$, let $\asrho:=1-\deq_{\sigma,\can}(n_\rho)$, so that $\can(\sigma)=\sum_{\rho\in\ray}\asrho\drho$.
By \cite[Proposition 8.7 (a)]{MR1679841}, we know that if the anticanonical sheaf of $\X$ is generated by its global sections, then $\can(\sigma)$ is an effective divisor for all $\sigma\in\maxc$, and $\asrho=0$ for  all $\rho\in\sigma(1)$. From now on, we assume that the anticanonical sheaf of $\X$ is generated by its global sections.

Let $\nu\in\plk$.
 For every $\ux=(x_\rho)_{\rho\in\ray}\in\K_\nu^N$ and every  effective divisor $D=\sum_{\rho\in\ray}a_\rho\drho$ of $X$, we denote by $\ux^D$ the product $\prod_{\rho\in\ray}x_\rho^{a_\rho}$. 

Let $\hX:X(k)\to\R_{\geq0}$ be the height function corresponding to the anticanonical divisor $\can$ defined in \cite[(10.4)]{MR1679841}.
 For every $\ux\in\torsor(k)$,
\begin{equation}
\hX(\pi(\ux))=\prod_{\nu\in\plk}\sup_{\sigma\in\maxc}|\ux^{\can(\sigma)}|_{\nu}\label{eq:height_all_places}
\end{equation}
 by
 \cite[Proposition 10.14]{MR1679841}.
For integral points on the twisted torsors that appear in \eqref{eq:param}, the height \eqref{eq:height_all_places} can be expressed as product involving just the archimedean places as the following proposition shows.

\begin{prop}\label{height2}
Let $\ua\in\clrep^r$ and $\ux\in\awttorsor(\Ok)$, then \begin{equation*}
\hX(\pi(\ux))=\frac1{\norm(\ua^{\can})}\prod_{\nu\in\pli}\sup_{\sigma\in\maxc}|\ux^{\can(\sigma)}|_{\nu}.
\end{equation*}
\end{prop}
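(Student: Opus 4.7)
Starting from the global product formula \eqref{eq:height_all_places}, the plan is to split the product over $\plk$ into the infinite and finite parts and to identify the finite part with $\norm(\ua^{\can})^{-1}$. Thus the main task is to prove
\begin{equation*}
\prod_{\nu\in\plf}\sup_{\sigma\in\maxc}|\ux^{\can(\sigma)}|_\nu=\frac{1}{\norm(\ua^{\can})}.
\end{equation*}
Since both sides factor over the prime ideals $\p$ of $\Ok$, it suffices, for each $\nu\in\plf$ corresponding to $\p$, to translate the local supremum into a statement about $\p$-adic valuations and to show
\begin{equation*}
\min_{\sigma\in\maxc}\sum_{\rho\in\ray}\asrho\, v_\p(x_\rho)=v_\p(\ua^{\can}).
\end{equation*}

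To do this, I would introduce the nonnegative shifted valuations $m_\rho:=v_\p(x_\rho)-v_\p(\ua^{\drho})$, which are $\geq 0$ because $\ux\in\bigoplus_\rho\ua^{\drho}$ forces $x_\rho\in\ua^{\drho}$ and hence $\idcoord=x_\rho\ua^{-\drho}\subseteq\Ok$. A direct substitution gives
\begin{equation*}
\sum_{\rho\in\ray}\asrho\, v_\p(x_\rho)=\sum_{\rho\in\ray}\asrho\, m_\rho+\sum_{\rho\in\ray}\asrho\, v_\p(\ua^{\drho}).
\end{equation*}
The second summand equals $v_\p(\ua^{\can(\sigma)})$, which in turn equals $v_\p(\ua^{\can})$ because $\can(\sigma)\sim\can$ in $\pic(\X)$ and the definition of $\ua^{D}$ depends only on the class of $D$. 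Hence the identity to prove reduces to $\min_{\sigma}\sum_{\rho}\asrho m_\rho=0$.

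The minimum is clearly $\geq 0$ since $\asrho\geq 0$ (because the anticanonical sheaf is globally generated, so $\can(\sigma)$ is effective, by \cite[Proposition 8.7]{MR1679841}) and $m_\rho\geq 0$. To obtain equality I would invoke the coprimality condition built into \eqref{eq:integral_points_twisted_torsor}: the hypothesis $\sum_{\sigma\in\maxc}\uidcoord^{\usigma}=\Ok$ means that at the fixed prime $\p$ there exists some $\sigma\in\maxc$ for which $v_\p(\uidcoord^{\usigma})=\sum_{\rho\notin\sigma(1)}m_\rho=0$, and since $m_\rho\geq 0$ this forces $m_\rho=0$ for every $\rho\notin\sigma(1)$. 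Combined with the vanishing $\asrho=0$ for $\rho\in\sigma(1)$ (again from \cite[Proposition 8.7]{MR1679841}), this $\sigma$ realizes $\sum_\rho\asrho m_\rho=0$ and the minimum is achieved. Taking the product over $\p$ then yields the desired factor $\norm(\ua^{\can})^{-1}$ and finishes the proof.

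The only genuinely delicate point is the matching of the two indexings: the set $\ray\smallsetminus\sigma(1)$ governs the coprimality conditions \eqref{eq:integral_points_twisted_torsor}, while the support of $\can(\sigma)=\sum_\rho\asrho\drho$ is a priori only contained in $\ray\smallsetminus\sigma(1)$. The hypothesis that $\can$ is globally generated is precisely what aligns the two, via $\asrho=0$ on $\sigma(1)$ and $\asrho\geq 0$ elsewhere; everything else is bookkeeping with valuations.
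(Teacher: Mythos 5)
Your proposal is correct and follows essentially the same route as the paper's proof: split the product over $\plk$ at the finite places, work prime by prime with the shifted valuations $v_\p(x_\rho)-v_\p(\ua^{\drho})$, use $[\can(\sigma)]=[\can]$ to peel off $v_\p(\ua^{\can})$, and invoke the coprimality condition together with $\asrho\geq0$ (and $\asrho=0$ on $\sigma(1)$) to show the residual minimum vanishes. The only differences are cosmetic — the paper writes the two valuations $m_{\rho,\p},n_{\rho,\p}$ separately rather than immediately passing to their difference — so the two arguments are the same in substance.
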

\begin{proof} 
For every $\rho\in\ray$, let $\ma_{\rho,\p},\na_{\rho,\p}\in\Z$ be defined  by the equalities $\ua^{\drho}=\prod_{\p}\p^{\ma_{\rho,\p}}$ and $x_\rho\Ok=\prod_{\p}\p^{\na_{\rho,\p}}$, where $\prod_\p$ denotes a product over the prime ideals of $\Ok$. Let $\nu\in\plf$ be a finite place associated to a prime ideal $\p$ of $\Ok$. 
 Then
\begin{equation*}\label{locp}
\sup_{\sigma\in\maxc}|\ux^{\can(\sigma)}|_{\nu}=\left(\frac1{\norm(\p)}\right)^{\min_{\sigma\in\maxc}\sum_{\rho\in\ray}\asrho\na_{\rho,\p}}.
\end{equation*}
We write
\begin{equation*}\begin{split}
\min_{\sigma\in\maxc}\sum_{\rho\in\ray}\asrho\na_{\rho,\p}&=\min_{\sigma\in\maxc}\left(\sum_{\rho\in\ray}\asrho(\na_{\rho,\p}-\ma_{\rho,\p})+\sum_{\rho\in\ray}\asrho\ma_{\rho,\p}\right)\\
&=\min_{\sigma\in\maxc}\sum_{\rho\in\ray\smallsetminus\sigma(1)}\asrho(\na_{\rho,\p}-\ma_{\rho,\p})+\sum_{\rho\in\ray}\ma_{\rho,\p},
\end{split}\end{equation*}
as  $\asrho=0$ for all $\rho\in\sigma(1)$ and $[\can(\sigma)]=[\can]$ in $\pic(\X)$ for all $\sigma\in\maxc$.

Since $x_{\rho}\in\ua^{\drho}$ for all $\rho\in\ray$, the inequality $\na_{\rho,\p}\geq\ma_{\rho,\p}$ holds for all $\rho\in\ray$.
The coprimality condition $\sum_{\sigma\in\maxc}\uidcoord^{\usigma}=\Ok$ gives
\begin{equation*}\label{eq:cocoprim}
\min_{\sigma\in\maxc}\sum_{\rho\in\ray\smallsetminus\sigma(1)}(\na_{\rho,\p}-\ma_{\rho,\p})=0.
\end{equation*}
Hence, $\min_{\sigma\in\maxc}\sum_{\rho\in\ray}\asrho\na_{\rho,\p}=\sum_{\rho\in\ray}\ma_{\rho,\p}$, as $\can(\sigma)$ is effective for all $\sigma\in\maxc$.
\end{proof}

\begin{lem} \label{11.20}
Let $\nu\in\plk$ and $\ux\in\torsor(k_{\nu})$ then
\begin{equation*}
|\ux^{\can}|_{\nu}\leq\sup_{\sigma\in\maxc}|\ux^{\can(\sigma)}|_{\nu}.
\end{equation*}
\end{lem}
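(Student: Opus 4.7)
The plan is to reduce to the case where $\ux$ lies in the open torus $\tortor(\K_\nu)$, and then to exploit the support function of $\can$ on the complete fan $\fan$.

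If $x_\rho = 0$ for some $\rho \in \ray$, then $\ux^{\can} = \prod_{\rho'} x_{\rho'} = 0$, so $|\ux^{\can}|_\nu = 0$ and the inequality is trivial. From now on I assume $x_\rho \neq 0$ for all $\rho$, and set $v_\rho := -\log|x_\rho|_\nu \in \R$ together with the auxiliary vector $w := \sum_{\rho \in \ray} v_\rho n_\rho \in \lattR$. Since $\X$ is a smooth complete toric variety, $|\fan| = \lattR$, so some maximal cone $\sigma \in \maxc$ contains $w$, and I can write $w = \sum_{\rho \in \sigma(1)} c_\rho n_\rho$ with $c_\rho \geq 0$.

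The heart of the argument is to evaluate the linear functional $\deq_{\sigma,\can}$ on $w$ in two different ways. On the one hand, the global generation hypothesis recalled in \S\ref{subsection_height} gives $\asrho = 0$, i.e.\ $\deq_{\sigma,\can}(n_\rho) = 1$, for every $\rho \in \sigma(1)$, so
\[
\deq_{\sigma,\can}(w) \;=\; \sum_{\rho \in \sigma(1)} c_\rho \;\geq\; 0.
\]
On the other hand, by linearity and the defining relation $\asrho = 1 - \deq_{\sigma,\can}(n_\rho)$,
\[
\deq_{\sigma,\can}(w) \;=\; \sum_{\rho \in \ray} v_\rho\bigl(1 - \asrho\bigr) \;=\; -\log|\ux^{\can}|_\nu \,+\, \log|\ux^{\can(\sigma)}|_\nu.
\]
Combining these and exponentiating gives $|\ux^{\can}|_\nu \leq |\ux^{\can(\sigma)}|_\nu \leq \sup_{\tau \in \maxc} |\ux^{\can(\tau)}|_\nu$, as required. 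The argument is insensitive to whether $\nu$ is archimedean or finite, since only the multiplicativity of $|\cdot|_\nu$ is used.

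I do not expect a serious obstacle. The only combinatorial input is the selection, for a given $\ux$, of a maximal cone $\sigma$ containing $w$, together with the already-noted vanishing $\asrho = 0$ for $\rho \in \sigma(1)$ and nonnegativity $\asrho \geq 0$; these are precisely the translation into the support-function language of the assumption that the anticanonical sheaf is globally generated. Once $\sigma$ is chosen this way, everything reduces to the linearity computation above.
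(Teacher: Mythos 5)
Your proof is correct. The paper's own proof is a one-line deduction from Salberger's Proposition 9.2, which supplies a maximal cone $\sigma$ with $\bigl|\ux^{\can}/\ux^{\can(\sigma)}\bigr|_\nu\leq1$; your argument reconstructs that black box directly, taking $\sigma$ to be any maximal cone of the complete fan $\fan$ containing $w=\sum_{\rho\in\ray}(-\log|x_\rho|_\nu)\,n_\rho$ and evaluating the linear form $\deq_{\sigma,\can}$ on $w$ in two ways, and so is the same approach made self-contained rather than cited. One small attribution worth tidying: the vanishing $\asrho=0$ for $\rho\in\sigma(1)$ does not come from global generation. It follows from the defining relation $\deq_{\sigma,\can}(n_\rho)=1$ for $\rho\in\sigma(1)$ (the coefficient of $\drho$ in $\can$ is $1$), and that identity is exactly what your first evaluation $\deq_{\sigma,\can}(w)=\sum_{\rho\in\sigma(1)}c_\rho\geq0$ actually uses. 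Global generation instead gives $\asrho\geq0$ for \emph{all} $\rho$ and $\sigma$; your linearity computation never invokes this once all $x_\rho\neq0$, and it is needed only so that $\ux^{\can(\sigma)}$ is a polynomial expression, making the right-hand side of the lemma well defined even when a coordinate vanishes.
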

\begin{proof}
By \cite[Proposition 9.2]{MR1679841} there exists a cone $\sigma\in\maxc$ such that $\left|\frac{\ux^{\can}}{\ux^{\can(\sigma)}}\right|_{\nu}\leq1$. Then $|\ux^{\can}|_{\nu}\leq|\ux^{\can(\sigma)}|_{\nu}\leq\sup_{\sigma\in\maxc}|\ux^{\can(\sigma)}|_{\nu}$.
\end{proof}

\section{M\"obius inversion}\label{sec:moebius_inversion}

\subsection{Generalized M\"obius function}\label{subsec:generalized_moebius_function}

We introduce a generalized M\"obius function, like in \cite[\S8.5]{MR1340296} and \cite[\S11]{MR1679841}, that we use to get rid of the coprimality condition  in \eqref{eq:integral_points_twisted_torsor} via M\"obius inversion. In order to do so, we fix some notation.

Let $\idk$ be the set of nonzero ideals of $\Ok$.
For every $\ud=(\did_{\rho})_{\rho\in\ray}\in\idk^N$, let $\norm(\ud):=\prod_{\rho\in\ray}\norm(\did_{\rho})\in\Z_{>0}$. 
For every prime ideal $\p$ of $\Ok$, we denote by $\partsum$ a sum over the set of $\ud=(\did_{\rho})_{\rho\in\ray}\in\idk^N$ such that $\did_{\rho}$ is a nonnegative power of $\p$ for all $\rho\in\ray$.
We denote by $\prod_{\p}$ a product over all prime ideals of $\Ok$, and by $\sum_{\norm(\ud)\leq\bb}$, $\sum_{\norm(\ud)\geq\bb}$ a sum over the set of $\ud\in\idk^N$ that satisfy $\norm(\ud)\leq\bb$, $\norm(\ud)\geq\bb$, respectively.

For every $\ub=(\bid_{\rho})_{\rho\in\ray}\in\idk^N$ and for every effective divisor $D$ of the form $\sum_{\rho\in\ray}a_\rho\drho$ on $\X$, let $\ub^D:=\prod_{\rho\in\ray}\bid_{\rho}^{a_\rho}$.
We denote by 
\begin{equation*}
\chi:\idk^N\to\{0,1\}
\end{equation*}
 the characteristic function of the subset
\begin{equation*}
\{\ub\in\idk^N:\sum_{\sigma\in\maxc}\ub^{\usigma}=\Ok\}\ \subseteq\ \idk^N.
\end{equation*}

For every $\ud\in\idk^N$, let 
\begin{equation*}
\chi_{\ud}:\idk^N\to\{0,1\}
\end{equation*} be the characteristic function of the subset
\begin{equation*}
\{\ub\in\idk^N: \ud|\ub\}\ \subseteq\ \idk^N,
\end{equation*}
where $\ud|\ub$ means $\bid_{\rho}\subseteq\did_{\rho}$ for all $\rho\in\ray$.

By \cite[ Lemme 8.5.1]{MR1340296}, there exists a unique function $\mob:\idk^N\to\Z$ such that \begin{equation*}\label{chimob}
\chi=\sum_{\ud\in\idk^N}\mob(\ud)\chi_{\ud},\end{equation*}
which satisfies, for every $\ud\in\idk^N$,
\begin{equation}\label{mobmult}
\mob(\ud)=\prod_{\p}\mob(\ud_{\p}),
\end{equation}  
where $\ud_\p:=(\did_{\rho,\p})_{\rho\in\ray}$ with $\did_{\rho,\p}$ a nonnegative power of $\p$ such that $\did_\rho=\prod_\p\did_{\rho,\p}$ holds in $\Ok$ for all $\rho\in\ray$.

The next proposition, analogous to \cite[Lemmas 11.15 and 11.19]{MR1679841}, shows some properties of $\mob$ that we use later.

\begin{prop}\label{mobius_function}  
Let  $\f\in\Z_{>0}$ be the smallest number of rays of $\fan$ not contained in a cone of $\fan$.
Then
\begin{enumerate}[label=(\roman{*}), ref=(\roman{*})]
\item the product $\prod_{\p}\partsum\frac{\mob(\ud)}{\norm(\ud)^s}$ and the sum $\sum_{\ud\in\idk^N}\frac{\mob(\ud)}{\norm(\ud)^s}$ converge absolutely and coincide for $s>1/\f$; \label{mobsumprod}
\item $\sum_{\norm(\ud)\leq\bb}|\mob(\ud)|=O(\bb^{1/f+\varepsilon})$, for all $\varepsilon>0$;\label{moblowpartsum}
\item $\sum_{\norm(\ud)\geq\bb}(|\mob(\ud)|/\norm(\ud))=O(\bb^{1/f-1+\varepsilon})$, for all $\varepsilon>0$.\label{mobuppartsum}
\end{enumerate}
\end{prop}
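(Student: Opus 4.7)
Everything reduces to a local analysis at each prime $\p$ via the multiplicativity \eqref{mobmult}. The function $\chi$ localises at $\p$ to a function of only the support $\{\rho\in\ray:\p\mid\bid_\rho\}$: indeed the coprimality relation $\sum_{\sigma\in\maxc}\ub^{\usigma}=\Ok$ holds at $\p$ precisely when this support is contained in $\sigma(1)$ for some $\sigma\in\maxc$. The divisibility poset $(\idk^N,\mid)$ factors at $\p$ as a product of $N$ chains, and classical M\"obius inversion on a product of chains forces $\mob(\ud_\p)=0$ as soon as some $\did_{\rho,\p}$ is divisible by $\p^2$. Writing a surviving $\ud_\p$ as the indicator of $S=\{\rho:\did_{\rho,\p}=\p\}\subseteq\ray$ yields
\[\mob(\ud_\p)=\sum_{T\subseteq S}(-1)^{|T|}F(S\smallsetminus T),\]
where $F(S'):=1$ if $S'\subseteq\sigma(1)$ for some $\sigma\in\maxc$, and $F(S'):=0$ otherwise.

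The extremal definition of $f$ then provides the key cancellation: if $|S|<f$, every subset of $S$ lies in some $\sigma(1)$, so $F\equiv 1$ on subsets of $S$ and the above sum collapses to $(1-1)^{|S|}$, which equals $0$ whenever $S\neq\emptyset$. Hence $\mob(\ud_\p)$ vanishes unless $\ud_\p$ is trivial or its support has cardinality at least $f$; in every case one has the crude bound $|\mob(\ud_\p)|\leq 2^N$. The local Dirichlet factor therefore satisfies
\[\sum_{\ud_\p}\frac{|\mob(\ud_\p)|}{\norm(\ud_\p)^s}=1+O\bigl(\norm(\p)^{-sf}\bigr)\]
with implicit constant depending only on $N$. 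Comparison with the Dedekind zeta function $\zeta_\K(sf)$ shows that the Euler product converges absolutely for $s>1/f$, and multiplicativity of $|\mob|$ identifies it with the Dirichlet series $\sum_\ud|\mob(\ud)|/\norm(\ud)^s$, proving (i).

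Parts (ii) and (iii) follow by the standard device of inserting a suitable power of $B/\norm(\ud)$. For $s=1/f+\varepsilon\in(1/f,1)$, the inequalities $(B/\norm(\ud))^s\geq 1$ on $\norm(\ud)\leq B$ and $(\norm(\ud)/B)^{1-s}\geq 1$ on $\norm(\ud)\geq B$ give
\[\sum_{\norm(\ud)\leq B}|\mob(\ud)|\leq B^s\sum_\ud\frac{|\mob(\ud)|}{\norm(\ud)^s}=O(B^s),\]
\[\sum_{\norm(\ud)\geq B}\frac{|\mob(\ud)|}{\norm(\ud)}\leq B^{s-1}\sum_\ud\frac{|\mob(\ud)|}{\norm(\ud)^s}=O(B^{s-1}),\]
as required.

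The main obstacle is the combinatorial vanishing statement: one must correctly identify $\mob$ at a single prime as the M\"obius function of a product of chains and then exploit the extremal property defining $f$ to realise the $(1-1)^{|S|}$ cancellation. All remaining steps are routine manipulations of absolutely convergent Dirichlet series and go through for arbitrary number fields.
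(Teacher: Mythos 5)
Your proof is correct and follows essentially the same strategy as the paper for parts (i) and (ii): reduce to a local analysis via multiplicativity, use the definition of $\f$ to get the local Euler factor in the form $1+O(\norm(\p)^{-\f s})$ uniformly in $\p$, and deduce (ii) by inserting $(\bb/\norm(\ud))^{1/\f+\varepsilon}$. Your combinatorial explanation of the local vanishing is a more explicit version of the paper's intermediate claim; the paper directly asserts that $\mob(\ud_\p)=0$ whenever the support is contained in some $\sigma(1)$ (which already gives the $(1-1)^{|S|}$ cancellation without invoking $\f$), and only uses the minimality of $\f$ at the end to translate this into $\norm(\p)^{\f}\mid\norm(\ud_\p)$. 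Either phrasing works.

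Where you genuinely diverge is part (iii). The paper deduces (iii) from (ii) by a partial (Abel) summation, whereas you avoid this entirely and apply the dual weight-insertion trick: for $\norm(\ud)\geq\bb$ one has $(\norm(\ud)/\bb)^{1-s}\geq1$, so $\norm(\ud)^{-1}\leq \bb^{s-1}\norm(\ud)^{-s}$ and the tail is bounded directly by $\bb^{s-1}$ times the convergent Dirichlet series. This is shorter, cleaner, and requires nothing beyond (i); it is a legitimate improvement in exposition with no loss of generality.
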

\begin{proof}
To prove \ref{mobsumprod}, we note that \eqref{mobmult} gives\begin{equation*}
\sum_{\substack{\ud\in\idk^N\\\norm(\ud)\leq\bb}}\frac{|\mob(\ud)|}{\norm(\ud)^s}=\sum_{\substack{\ud\in\idk^N\\\norm(\ud)\leq\bb}}\prod_{\p}\frac{|\mob(\ud_{\p})|}{\norm(\ud_{\p})^s}\leq\prod_{\p}\partsum\frac{|\mob(\ud)|}{\norm(\ud)^s}
\end{equation*} for all $s\in\R$ and  $\bb>0$, and 
\begin{equation*}
\lim_{\bb\to\infty}\sum_{\substack{\ud\in\idk^N\\\norm(\ud)\leq\bb}}\frac{|\mob(\ud)|}{\norm(\ud)^s}=\prod_{\p}\partsum\frac{|\mob(\ud)|}{\norm(\ud)^s}.
\end{equation*} 
Hence, it suffices to show that $\prod_{\p}\partsum\frac{|\mob(\ud)|}{\norm(\ud)^s}$ converges for $s>1/{\f}$.

Let $\p$ be a prime ideal of $\Ok$. The sum $\partsum\frac{|\mob(\ud)|}{\norm(\ud)}$ is finite,
because $\mob((\p^{\exponent_{\rho}})_{\rho\in\ray})=0$ for all $N$-tuples of non-negative integers
$(\exponent_{\rho})_{\rho\in\ray}$ such that $\exponent_{\rho}\geq2$ for some $\rho\in\ray$.
If $(\exponent_{\rho})_{\rho\in\ray}$ is an $N$-tuple of non-negative integers, not all 0, and  $\exponent_{\rho}=0$ for all $\rho\in\ray\smallsetminus\sigma(1)$ for a maximal cone $\sigma\in\maxc$, then $\mob((\p^{\exponent_{\rho}})_{\rho\in\ray})=0$. Therefore, if $(\exponent_{\rho})_{\rho\in\ray}$ is an $N$-tuple of non-negative integers, not all 0, such that $\mob((\p^{\exponent_{\rho}})_{\rho\in\ray})\neq0$, then $\norm(\p)^{\f}|\norm((\p^{\exponent_{\rho}})_{\rho\in\ray})$. Then we can write
\begin{equation*}
\partsum\frac{|\mob(\ud)|}{\norm(\ud)^s}=1+\frac1{\norm(\p)^{\f s}}Q\left(\frac1{\norm(\p)^s}\right),
\end{equation*}
where $Q$ is a polynomial with non-negative integer coefficients.
For every $N$-tuple $(\exponent_{\rho})_{\rho\in\ray}$ of non-negative integers, $\mob((\p^{\exponent_{\rho}})_{\rho\in\ray})$ does not depend on the choice of the prime ideal $\p$. Hence, $Q$
 is independent of the choice of $\p$. 
 Thus
\begin{equation*}
\sum_{\p}\frac1{\norm(\p)^{\f s}}Q\left(\frac1{\norm(\p)^s}\right)\leq [k:\Q]Q(1)\sum_{n\in\Z_{>0}}\frac1{n^{fs}}
\end{equation*}
is convergent for $s>1/{\f}$. Here $\sum_\p$ denotes a sum over the prime ideals of $\Ok$.

Property \ref{moblowpartsum} follows from \ref{mobsumprod} because for every $\varepsilon>0$, 
\begin{equation*}
\sum_{\norm(\ud)\leq\bb}|\mob(\ud)|\leq\bb^{1/f+\varepsilon}\sum_{\norm(\ud)\leq\bb}\frac{|\mob(\ud)|}{\norm(\ud)^{1/f+\varepsilon}}\leq\bb^{1/f+\varepsilon}\sum_{\ud\in\idk^N}\frac{|\mob(\ud)|}{\norm(\ud)^{1/f+\varepsilon}}.
\end{equation*}

Property \ref{mobuppartsum} can be proven using \ref{moblowpartsum} as follows:
\begin{equation*}\begin{split}
\sum_{\norm(\ud)\geq\bb}\frac{|\mob(\ud)|}{\norm(\ud)}&=\sum_{\bbb\geq\bb}\sum_{\norm(\ud)\leq\bbb}\frac{|\mob(\ud)|}{\bbb}-\sum_{\bbb\geq\bb-1}\sum_{\norm(\ud)\leq\bbb}\frac{|\mob(\ud)|}{\bbb+1}=\\
&=\sum_{\bbb\geq\bb}\sum_{\norm(\ud)\leq\bbb}\frac{|\mob(\ud)|}{\bbb(\bbb+1)}-\frac1{\bb}\sum_{\norm(\ud)\leq\bb-1}|\mob(\ud)|\leq\\
&\leq\sum_{\bbb\geq\bb}\sum_{\norm(\ud)\leq\bbb}\frac{|\mob(\ud)|}{\bbb^2}-\frac1{\bb}\sum_{\norm(\ud)\leq\bb-1}|\mob(\ud)|=\\
&=O\left(\sum_{\bbb\geq\bb}\bbb^{1/\f-2+\varepsilon}\right)+O\left(\frac{(\bb-1)^{1/\f+\varepsilon}}{\bb}\right). \qedhere
\end{split}\end{equation*}
\end{proof}

After Proposition \ref{mobius_function} \ref{mobsumprod} we define 
\begin{equation}\label{kappa}
\ka:=\prod_{\p}\partsum\frac{\mob(\ud)}{\norm(\ud)}=\sum_{\ud\in\idk^N}\frac{\mob(\ud)}{\norm(\ud)}.\end{equation}

\subsection{M\"obius inversion for imaginary quadratic fields}\label{subsec:moebius_inversion_imaginary_quadratic}

From now on we assume that $k$ is an imaginary quadratic extension of $\Q$. 
Let $\crOk$ be the cardinality of the group of units $\rOk$ of $\Ok$. 
We identify with $\C$ the completion of $\K$ at its only infinite place and we denote by $|\cdot|_{\ii}$ the corresponding absolute value normalized as in \S\ref{notation}.
%
For every $\ux\in\C^N$, let 
\begin{equation*}\hsup(\ux):=\sup_{\sigma\in\maxc}|\ux^{\can(\sigma)}|_\ii.\end{equation*}
For every $\ua\in\clrep^r$, every $\ud\in\idk^N$ and every $\B>0$, we define
\begin{align*}
\Aad(\B)&:=\{\ux\in\tortor(k)\cap\bigoplus_{\rho\in\ray}\did_\rho\ua^\drho:\hsup(\ux)\leq\norm(\ua^{\can})\B\},\\
\Ca(\B)&:=\{\ux\in\ua\wttorsor(\Ok)\cap\tortor(k):\hX(\pi(\ux))\leq\B\}.
\end{align*}
Let $\uo\in\idk^N$ be defined by $\oid_\rho:=\Ok$ for all $\rho\in\ray$. Then 
\begin{equation*}\Aad(\B)=\{\ux\in\Aa(\B): \ud|\uidcoord\}.\end{equation*} 

The sets defined above are finite, as the following proposition shows.

\begin{prop}\label{finiteness} 
The sets
$\Aad(\B)$ and $\Ca(\B)$ are finite for all $\ua\in\clrep^r$, 
all $\ud\in\idk^N$ and all $\B>0$. 
In particular $\Aad(\B)=\emptyset$ if $\norm(\ud)>\B$.
\end{prop}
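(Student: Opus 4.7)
My plan is to realize $\Aad(\B)$ as the intersection of the discrete rank-$2N$ lattice $\bigoplus_{\rho\in\ray}\did_\rho\ua^{\drho}\subseteq\C^N$ with a region in which each coordinate $|x_\rho|_\ii$ is bounded from above; finiteness will then be immediate from discreteness. The analogous statement for $\Ca(\B)$ will follow by reducing to the case $\ud=\uo$ via Proposition \ref{height2}, which collapses to a single archimedean factor since $\K$ is imaginary quadratic.

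The heart of the argument is the passage from a single bound on the product $\prod_\rho|x_\rho|_\ii$ to coordinatewise bounds. First I would derive positive lower bounds: $x_\rho\neq 0$ (from $\ux\in\tortor(k)$) together with $x_\rho\in\did_\rho\ua^{\drho}$ gives $(x_\rho)\subseteq\did_\rho\ua^{\drho}$, so $\did_\rho\ua^{\drho}$ divides $(x_\rho)$ as fractional ideals, hence $|x_\rho|_\ii=\norm((x_\rho))\geq\norm(\did_\rho)\norm(\ua^{\drho})>0$. Multiplying over $\rho\in\ray$ and using $\can=\sum_\rho\drho$ yields
\begin{equation*}
\prod_{\rho\in\ray}|x_\rho|_\ii\geq\norm(\ud)\norm(\ua^{\can}).
\end{equation*}
On the other hand, Lemma \ref{11.20} applied at the unique archimedean place $\ii$ gives the product upper bound
\begin{equation*}
\prod_{\rho\in\ray}|x_\rho|_\ii=|\ux^{\can}|_\ii\leq\hsup(\ux)\leq\norm(\ua^{\can})\B.
\end{equation*}
Comparing the two displays immediately yields $\norm(\ud)\leq\B$, establishing the emptiness assertion by contraposition. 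Dividing the upper bound by the positive lower bounds at all indices $\rho'\neq\rho$ produces an explicit upper bound on each $|x_\rho|_\ii$ in terms of $\ua,\ud,\B$, and together with discreteness of $\did_\rho\ua^{\drho}$ in $\C\cong\R^2$ this shows $\Aad(\B)$ is finite.

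For $\Ca(\B)$ the description \eqref{eq:integral_points_twisted_torsor} gives the inclusion $\awttorsor(\Ok)\subseteq\bigoplus_\rho\ua^{\drho}$, and Proposition \ref{height2} combined with $\#\pli=1$ rewrites $\hX(\pi(\ux))\leq\B$ as $\hsup(\ux)\leq\norm(\ua^{\can})\B$, so $\Ca(\B)\subseteq\Aa_{\ua,\uo}(\B)$ and finiteness follows from the case $\ud=\uo$ treated above. I do not anticipate a genuine obstacle: the proof is a combination of two elementary ingredients (the lattice/ideal lower bound and Lemma \ref{11.20}). The only point that deserves attention is that the height bound by itself only controls the product $\prod_\rho|x_\rho|_\ii$; it is the integrality condition $\ux\in\bigoplus_\rho\did_\rho\ua^{\drho}$ that supplies the coordinatewise positive lower bounds needed to extract individual upper bounds from that product.
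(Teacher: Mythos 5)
Your proof is correct and follows essentially the same path as the paper's: both use the nonvanishing of the coordinates together with the containment $x_\rho\in\did_\rho\ua^{\drho}$ to obtain the product lower bound $\prod_\rho|x_\rho|_\ii\geq\norm(\ud)\norm(\ua^{\can})$, invoke Lemma \ref{11.20} at the unique archimedean place for the upper bound $\prod_\rho|x_\rho|_\ii\leq\hsup(\ux)\leq\norm(\ua^{\can})\B$, combine these for the emptiness claim and for the coordinatewise bounds $|x_\rho|_\ii\leq\norm(\did_\rho\ua^{\drho})\norm(\ud)^{-1}\B$, and then deduce finiteness of $\Ca(\B)$ from the inclusion $\Ca(\B)\subseteq A_{\ua,\uo}(\B)$ via \eqref{eq:integral_points_twisted_torsor} and Proposition \ref{height2}. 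The only superficial difference is that the paper phrases the key chain of inequalities through norms of the fractional ideals $\idcoord=x_\rho\ua^{-\drho}$ whereas you phrase it directly in terms of $|\cdot|_\ii$; these are the same computation, since for imaginary quadratic $\K$ one has $|x|_\ii=\norm(x\Ok)$.
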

\begin{proof} 
Let $\ua\in\clrep^r$, $\ud\in\idk^N$, $\B>0$ and
$\ux\in\Aa(\B)$. If $\ud|\uidcoord$, Lemma \ref{11.20} and the definition of $\can$ give 
\begin{equation*}
\norm(\ud)=\norm(\ud^{\can})\leq\norm(\uidcoord^{\can})=\frac{\norm(\ux^{\can})}{\norm(\ua^{\can})}\leq\frac{\hsup(\ux)}{\norm(\ua^{\can})}\leq\B,
\end{equation*}
and hence, $|x_\rho|_{\ii}\leq\norm(\did_\rho\ua^\drho)\norm(\ud)^{-1}\B$ for all $\rho\in\ray$, as $x_\rho\neq0$ for all $\rho\in\ray$.
Thus $\Aad(\B)$ is finite, and $\Aad(\B)=\emptyset$ if $\norm(\ud)>\B$. 
Moreover, $\Ca(\B)$ is finite as it is a subset of
$\Aa(\B)$ by \eqref{eq:integral_points_twisted_torsor} and Proposition \ref{height2}.
\end{proof}

Using the parameterizing property of universal torsors, we show that the counting function in Theorem \ref{maintheo} can be expressed in terms of the cardinalities of the sets $\Ca(\B)$ defined above.

\begin{prop}\label{cardexpr}
With $\card$ defined in Theorem \ref{maintheo} and the notation above, 
\begin{equation*}
\card(\B)=\frac1{\crOk^r}\sum_{\ua\in\clrep^r}\nCa(\B)
\end{equation*}
for all $\B>0$.
\end{prop}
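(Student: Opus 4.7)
The plan is to derive the identity by restricting the parameterization \eqref{eq:param} to the open torus $\torus\subseteq\X$ and by exploiting that, in the imaginary quadratic setting, the fibers of each twisted universal torsor morphism have constant finite cardinality. Since $\pi^{-1}(\torus)=\tortor$, intersecting \eqref{eq:param} with $\torus(\K)\subseteq\X(\K)$ yields the disjoint decomposition
\begin{equation*}
\torus(\K)\;=\;\bigsqcup_{\ua\in\clrep^r}\awpi\bigl(\awttorsor(\Ok)\cap\tortor(\K)\bigr).
\end{equation*}
By Proposition \ref{height2} the quantity $\hX(\pi(\ux))$ depends only on the image point in $\X(\K)$, so it is constant on the fibers of $\awpi$; in particular, $\Ca(\B)$ is precisely the full $\awpi$-preimage of $\{x\in\torus(\K):\hX(x)\leq\B\}\cap\awpi(\awttorsor(\Ok)\cap\tortor(\K))$.

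The central step is to identify the cardinality of those fibers. As $\awpi$ is a twisted universal torsor and the N\'eron--Severi torus $\ns$ is split of rank $r$, the fibers of $\awpi$ over $\Ok$-points are torsors under $\ns(\Ok)=(\rOk)^r$. In the imaginary quadratic case $\#\rOk=\crOk$ is finite, so every non-empty fiber of $\awpi$ above a point of $\torus(\K)$ has cardinality exactly $\crOk^r$. Combining this with the decomposition above gives
\begin{equation*}
\card(\B)=\sum_{\ua\in\clrep^r}\#\bigl\{x\in\awpi(\awttorsor(\Ok)\cap\tortor(\K)):\hX(x)\leq\B\bigr\}=\frac{1}{\crOk^r}\sum_{\ua\in\clrep^r}\nCa(\B),
\end{equation*}
which is the claim.

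I do not foresee a genuine obstacle: both the parameterization \eqref{eq:param} and the torsor structure of $\awpi$ are provided by \cite[\S2]{arXiv:1312.6603}, finiteness of the sets involved is Proposition \ref{finiteness}, and the height formula transports from $\pi$ to any twist $\awpi$ because $\torsor$ and $\awttorsor$ share a common generic fiber and Proposition \ref{height2} expresses $\hX$ purely in terms of the coordinates $\ux$. The only mild point of care is the formal distinction between $\pi$ and $\awpi$ in the definition of $\Ca(\B)$, which is harmless once the integral points $\ux\in\awttorsor(\Ok)\cap\tortor(\K)\subseteq\K^N$ are identified with their images in $\tortor(\K)$.
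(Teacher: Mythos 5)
Your proof is correct and follows essentially the same route as the paper: restrict the parameterization \eqref{eq:param} to $\torus(\K)$ via $\pi^{-1}(\torus)=\tortor$, observe that the non-empty fibers of each $\awpi$ over $\Ok$-points have cardinality $\#(\rOk)^r=\crOk^r$, and divide. The only cosmetic difference is that you explicitly invoke Proposition~\ref{height2} to justify that the height is constant on fibers, whereas the paper leaves this implicit; this is fine, though strictly speaking the point is immediate from $\hX$ being a function on $\X(\K)$ and needs no appeal to the archimedean formula.
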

\begin{proof}
Recall that $\card(\B)$ is the cardinality of the set
\begin{equation*}
\{\ux\in\torus(\K):\hX(\pi(\ux))\leq\B\}. 
\end{equation*}
Since $\pi^{-1}(\torus)=\tortor$, the parameterization \eqref{eq:param} gives
\begin{equation*}
\torus(\K)=\bigsqcup_{\ua\in\clrep^r}\awpi(\awttorsor(\Ok)\cap\tortor(\K)).
\end{equation*}
Let $\ua\in\clrep^r$. Since $\awpi$ is  a torsor over $\wtX$ under $\G_{m,\wtX}^r$ by \cite[Theorem 2.7]{arXiv:1312.6603}, its fibers over the $\Ok$-rational points of $\wtX$ are either empty or isomorphic to $(\rOk)^r$. Hence, each nonempty fiber of $\awpi$ over an $\Ok$-rational point of $\wtX$ contains exactly $\crOk^r$ points.
\end{proof}

In the next proposition we perform M\"obius inversion to reduce the proof of Theorem \ref{maintheo} to estimating the number of points in the sets $\Aad(\B)$.
\begin{prop}\label{CaB} For every $\ua\in\clrep^r$ and every $\B>0$,
\begin{equation*}
\nCa(\B)=\sum_{\ud\in\idk^N}\mob(\ud)\nAad(\B).
\end{equation*}
\end{prop}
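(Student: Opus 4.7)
The plan is to reduce $\nCa(\B)$ to a sum over $\Aa(\B)$ weighted by the characteristic function $\chi$, and then apply the M\"obius-type identity $\chi=\sum_{\ud}\mob(\ud)\chi_{\ud}$ term by term.

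More precisely, I would first unpack the definition of $\Ca(\B)$. By \eqref{eq:integral_points_twisted_torsor}, an element $\ux\in\tortor(\K)$ lies in $\awttorsor(\Ok)$ exactly when $\ux\in\bigoplus_{\rho}\ua^{\drho}$ and $\sum_{\sigma\in\maxc}\uidcoord^{\usigma}=\Ok$, i.e.\ when $\chi(\uidcoord)=1$. Proposition~\ref{height2} says that for such $\ux$ the anticanonical height is $\hX(\pi(\ux))=\norm(\ua^{\can})^{-1}\hsup(\ux)$, so the height bound $\hX(\pi(\ux))\leq\B$ is equivalent to $\hsup(\ux)\leq\norm(\ua^{\can})\B$. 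Taking $\ud=\uo$ in the definition of $\Aad(\B)$, this identifies
\begin{equation*}
\Ca(\B)=\{\ux\in\Aa(\B):\chi(\uidcoord)=1\},\qquad\text{hence}\qquad \nCa(\B)=\sum_{\ux\in\Aa(\B)}\chi(\uidcoord).
\end{equation*}

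Next I would insert the M\"obius identity from \S\ref{subsec:generalized_moebius_function}, which is a pointwise identity $\chi(\ub)=\sum_{\ud\in\idk^{N}}\mob(\ud)\chi_{\ud}(\ub)$; for any fixed $\ub$ only the finitely many $\ud$ with $\ud\mid\ub$ give a nonzero contribution, so there is no convergence issue in the inner sum. Substituting gives
\begin{equation*}
\nCa(\B)=\sum_{\ux\in\Aa(\B)}\sum_{\ud\in\idk^{N}}\mob(\ud)\chi_{\ud}(\uidcoord).
\end{equation*}

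Finally I would interchange the two sums. Proposition~\ref{finiteness} guarantees that $\Aa(\B)$ is finite and that $\Aad(\B)=\emptyset$ whenever $\norm(\ud)>\B$, so the double sum is in fact finite and the interchange is legitimate. The straightforward computation $\ud\mid\uidcoord\Longleftrightarrow x_{\rho}\in\did_{\rho}\ua^{\drho}\text{ for all }\rho$ shows that $\{\ux\in\Aa(\B):\chi_{\ud}(\uidcoord)=1\}=\Aad(\B)$, so after exchange the inner sum is exactly $\nAad(\B)$ and we obtain the claimed identity. There is no real obstacle here; the only mild subtlety is keeping track that $\ud\mid\uidcoord$ (divisibility of integral ideals) matches the membership condition $x_{\rho}\in\did_{\rho}\ua^{\drho}$ defining $\Aad(\B)$, which is the routine translation between the divisor-in-Cox-coordinates and integrality picture.
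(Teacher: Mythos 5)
Your proof is correct and follows exactly the same chain of reductions as the paper: identify $\nCa(\B)=\sum_{\ux\in\Aa(\B)}\chi(\uidcoord)$ via \eqref{eq:integral_points_twisted_torsor} and Proposition~\ref{height2}, substitute the pointwise identity $\chi=\sum_{\ud}\mob(\ud)\chi_{\ud}$, and interchange the two finite sums using Proposition~\ref{finiteness}. You simply spell out the bookkeeping (the translation $\ud\mid\uidcoord\Leftrightarrow x_\rho\in\did_\rho\ua^{\drho}$ and the finiteness justification) that the paper compresses into a single display.
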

\begin{proof}
The definition of $\chi$, $\mob$ and $\chi_{\ud}$, together with \eqref{eq:integral_points_twisted_torsor} and Proposition \ref{finiteness}, gives
\begin{align*}
\nCa(B)&
=\sum_{\ux\in\Aa(\B)}\chi(\uidcoord)
=\sum_{\ux\in\Aa(\B)}\sum_{\ud\in\idk^N}\mob(\ud)\chi_{\ud}(\uidcoord)=\\
&=\sum_{\ud\in\idk^N}\mob(\ud)\sum_{\ux\in\Aa(\B)}\chi_{\ud}(\uidcoord)
=\sum_{\ud\in\idk^N}\mob(\ud)\nAad(\B).\qedhere
\end{align*}
\end{proof}

\section{Proof of Theorem \ref{maintheo}}\label{sec:proof_main_theorem}

We prove Theorem \ref{maintheo} by estimating the cardinalities of the sets $\Aad(\B)$, namely by counting lattice points in the following subsets of $\C^N$. 

For every $\ua\in\clrep^r$ and every $\B>0$, let 
\begin{equation*}\Da(\B):=\{\ux\in\C^N:|x_{\rho}|_{\ii}\geq\norm(\ua^{\drho})\ \forall\rho\in\ray, \ 
\hsup(\ux)\leq\norm(\ua^\can)\B
\}.\end{equation*}
We note that $\Aad(\B)=\Da(\B)\cap\bigoplus_{\rho\in\ray}\did_\rho\ua^\drho$ for all $\ua\in\clrep^r$ and $\ud\in\idk^N$. 
In Proposition \ref{lemnAaBint} 
we approximate the cardinality of  the sets $\Aad(\B)$ by the volume of $\Da(\B)$ with respect to the Haar measure $\dadux$ on $\C^N\cong\R^{2N}$ normalized such that the volume of a fundamental domain for the lattice $\bigoplus_{\rho\in\ray}\did_{\rho}\ua^{\drho}$ is 1.
Hence, we first compute the volume of $\Da(\B)$.

\subsection{Volume computation}
Following \cite[Notation 9.1]{MR1679841}, we consider
\begin{equation*}
\partitionfunction: \torus(\C)=(\C^\times)^{\dim\X}\to\lattR, \quad \ux\mapsto (\log|x_i|_\ii)_{1\leq i\leq\dim\X}.
\end{equation*}
For every maximal cone $\sigma\in\maxc$, let $\Csigma$ be the closure of $\partitionfunction^{-1}(-\sigma)$ in $\X(\C)$. 
\begin{remark}\label{rem:Csigma}
We observe that $\ux\in\pi^{-1}(\Csigma)$ if and only if $|\ux^{\drho-\drho(\sigma)}|_\ii\leq1$ for all $\rho\in\sigma(1)$, as the proof of \cite[Proposition 11.22]{MR1679841} works with $\R$ replaced by $\C$, and in this case $\hsup(\ux)=|\ux^{\can(\sigma)}|_\ii$ (cf.~the proof of \cite[Proposition 9.8]{MR1679841}). 
\end{remark}

\begin{prop}\label{integralevaluation}
Assume that $r>1$.
For every $\ua\in\clrep^r$ and $\ud\in\idk^N$, 
\begin{equation*}
\norm(\ud)\int_{\Da(\B)}\dadux=\frac{(2\pi)^N\#\maxc\pey(X)}{(\sqrt{\disck})^N}\B(\log\B)^{r-1}+O(\B(1+\log\B)^{r-2})
\end{equation*}
where $\pey(X)$ is the constant $\alpha_{\textit{Peyre}}(\X)$ 
defined in \cite[\S7]{MR1679841}, and the implicit constant in the error term does not depend on $\ud$.
\end{prop}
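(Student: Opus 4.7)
The plan is to reduce $\int_{\Da(\B)}\dadux$ to a real integral over $[1,\infty)^N$, and then invoke the archimedean volume computation as in Salberger's rational-field case. I first express $\dadux$ in terms of Lebesgue measure: the lattice $\bigoplus_\rho\did_\rho\ua^\drho$ has covolume $\norm(\ud)\prod_\rho\norm(\ua^\drho)(\sqrt{\disck}/2)^N$ in $\C^N\cong\R^{2N}$, since each summand $\did_\rho\ua^\drho\subset\C$ is a rank-$2$ lattice of covolume $\norm(\did_\rho\ua^\drho)\sqrt{\disck}/2$. Consequently
\begin{equation*}
\norm(\ud)\int_{\Da(\B)}\dadux \;=\; \frac{2^N}{(\sqrt{\disck})^N\prod_\rho\norm(\ua^\drho)}\int_{\Da(\B)}\dd\ux,
\end{equation*}
where $\dd\ux$ is Lebesgue measure on $\C^N$. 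The left-hand side is already manifestly independent of $\ud$, which forces any implicit constant in the error term to be uniform in $\ud$.

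Since $\Da(\B)$ and $\hsup$ depend on $\ux$ only through $(|x_\rho|_\ii)_\rho$, I pass to polar coordinates $x_\rho=r_\rho e^{i\theta_\rho}$ and integrate out the angles (factor $(2\pi)^N$). The substitutions $s_\rho=r_\rho^2=|x_\rho|_\ii$ (Jacobian $(1/2)^N$) and $t_\rho=s_\rho/\norm(\ua^\drho)$ absorb the remaining dependence on $\ua$, using the key identity $[\can(\sigma)]=[\can]$ in $\pic(\X)$ for every $\sigma\in\maxc$, which gives $\norm(\ua^{\sum_\rho\asrho\drho})=\norm(\ua^\can)$ and turns the height constraint into $\sup_\sigma\prod_\rho t_\rho^\asrho\leq\B$. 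After cancellations,
\begin{equation*}
\norm(\ud)\int_{\Da(\B)}\dadux \;=\; \frac{(2\pi)^N}{(\sqrt{\disck})^N}\int_{\tilde D(\B)}\prod_\rho \dd t_\rho,
\end{equation*}
where $\tilde D(\B):=\{\underline t\in[1,\infty)^N:\sup_{\sigma\in\maxc}\prod_\rho t_\rho^\asrho\leq\B\}$ depends only on $\fan$ and $\B$. This intermediate formula already accounts for the factor $(2\pi)^N/(\sqrt{\disck})^N$ in the target expression and explains why the implicit constant is independent of $\ua$ and $\ud$.

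It remains to prove $\int_{\tilde D(\B)}\prod_\rho \dd t_\rho=\#\maxc\cdot\pey(\X)\cdot\B(\log\B)^{r-1}+O(\B(1+\log\B)^{r-2})$. The substitution $u_\rho=\log t_\rho$ rewrites this integral as
\begin{equation*}
\int_{\underline u\in\R_{\geq 0}^N,\ \sup_{\sigma\in\maxc}\sum_\rho\asrho u_\rho\leq\log\B} e^{\sum_\rho u_\rho}\,\dd\underline u,
\end{equation*}
which is precisely the real archimedean integral evaluated in \cite[\S11]{MR1679841}; it depends only on the combinatorics of $\fan$, not on the base field. The argument partitions $\R_{\geq 0}^N$ into the chambers on which a single $\sigma\in\maxc$ attains the supremum, uses the short exact sequence $0\to M\to\R^N\to\pic(\X)_\R\to 0$ to separate the $\sigma(1)$-directions from the remaining ones, and matches the leading term with the integral defining $\pey(\X)$ over the dual of the effective cone in $\pic(\X)^\vee_\R$.

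The main obstacle is this last real computation. One has to show that the $\#\maxc$ chambers contribute equally at leading order, and that the boundary contributions, arising both from the partition of $\R_{\geq 0}^N$ and from the truncation near the boundary of the effective cone, produce an error of size $O(\B(1+\log\B)^{r-2})$. The assumption $r>1$ is what guarantees that the exponent $r-2$ in the error term is non-negative and that the chamber decomposition produces a leading singularity of the expected order $(\log\B)^{r-1}$.
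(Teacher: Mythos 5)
Your proposal is correct and follows essentially the same route as the paper: compute the covolume of $\bigoplus_\rho\did_\rho\ua^\drho$ via \cite[Lemma V.2.2]{MR0282947}, pass to polar coordinates and substitute $t_\rho=|x_\rho|_\ii/\norm(\ua^\drho)$ to eliminate all dependence on $\ua$ and $\ud$, and then reduce to Salberger's real archimedean integral \cite[(11.41)]{MR1679841}, decomposing according to which maximal cone $\sigma$ attains the supremum in the height. The only cosmetic difference is that the paper decomposes $\Da(\B)$ into the pieces $\Das(\B)=\Da(\B)\cap\pi^{-1}(\Csigma)$ \emph{before} changing variables (checking that the overlaps have Lebesgue measure zero), whereas you change variables globally first and decompose the resulting region $\tilde D(\B)$ afterward; the two orderings are equivalent, since $\pi^{-1}(\Csigma)$ is precisely the locus where $\sigma$ achieves $\hsup$ by Remark \ref{rem:Csigma}.
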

\begin{proof}
Fix $\ua\in\clrep^r$. For every $\B>0$, the subset $\Da(\B)\subset \C^N$ is bounded by Lemma \ref{11.20}. 
For every $\sigma\in\maxc$, let $\Das(\B):=\Da(\B)\cap\pi^{-1}(\Csigma)$.
Let $\sigma'\neq\sigma$ in $\maxc$ and $\rho\in\sigma(1)\smallsetminus\sigma'(1)$. 
Since $|\ux^{\drho-\drho(\sigma)}|_\ii=1$ 
for all $\ux\in\Das(\B)\cap\Dasp(\B)$ (cf.~proof of \cite[Proposition 11.22]{MR1679841}), and $\deq_{\sigma,\drho}\neq0$, 
the set $\Das(\B)\cap\Dasp(\B)$ is contained in a codimension 1 subvariety of $\C^N\cong\R^{2N}$. 
Hence, $\int_{\Das(\B)\cap\Dasp(\B)}\dadux=0$, and
\begin{equation*}
\int_{\Da(\B)}\dadux=\sum_{\sigma\in\maxc}\int_{\Das(\B)}\dadux.
\end{equation*}
Fix $\sigma\in\maxc$. By \cite[Lemma V.2.2]{MR0282947}, the lattice $\did_\rho\ua^\drho$ in $\C\cong\R^2$ has determinant $\norm(\did_\rho\ua^\drho)\sqrt{\disck}/2$ for all $\rho\in\ray$. 
Therefore,
\begin{equation*}
\int_{\Das(\B)}\dadux=\frac{2^{N}}{\norm(\ud)\norm(\ua^{\can})(\sqrt{\disck})^N}\int_{\Das(\B)}\dd\ux,
\end{equation*}
where $\dd\ux$ is the usual Lebesgue measure on $\C^N\cong\R^{2N}$.

Passing to polar coordinates with $y_{\rho}:=|x_{\rho}|_{\ii}/\norm(\ua^{\drho})$ for all $\rho\in\ray$ gives
\begin{equation*}
\int_{\Das(\B)}\dadux=\frac{(2\pi)^N}{\norm(\ud)(\sqrt{\disck})^N}\int_{\DasP(\B)}\dd\uyi,
\end{equation*}
where $\dd\uyi$ is the usual Lebesgue measure on $\R^{N}$ and $\DasP(\B)$ is the set of $\uyi\in\R^N$ that satisfy
\begin{equation*}
\min_{\rho\in\ray}\yi_\rho\geq1,\ \prod_{\rho\in\ray\smallsetminus\sigma(1)}\yi^{\asrho}\leq\B,\quad \yi_\rho\leq\prod_{\rho'\in\ray\smallsetminus\sigma(1)}\yi_{\rho'}^{-\deq_{\sigma,\drho}(n_{\rho'})}\ \forall\rho\in\sigma(1)
\end{equation*}
(cf.~Remark \ref{rem:Csigma}). Moreover,
  \cite[(11.41)]{MR1679841} gives
\begin{equation*}
\int_{\DasP(\B)}\dd\uyi=\pey(X)\B(\log\B)^{r-1}+O(\B(1+\log\B)^{r-2}). \qedhere
\end{equation*}
\end{proof}

As explained in the introduction, we compare the cardinality of the sets $\Aad(\B)$ with the volume of $\Da(\B)$ after intersecting both with a suitable partition of $\C^N$ in  strongly convex rational polyhedral cones. The next two lemmas are the tools to produce such a partition.

\subsection{Ideal lattices in $\C\cong\R^2$}\label{subsec:lattices}
We show how to produce a partition of $\C\cong\R^2$ in six cones generated by bases of an ideal lattice consisting of vectors of small length with respect to the determinant of the lattice, with the property that in each cone the sum of the generators is longer than their difference.

\begin{lem}\label{idlatt_partition}
Let $\aid$ be a nonzero fractional ideal of $\K$. Then there exist $\vll_1,\dots,\vll_6\in\aid$ such that, if we set $\vll_7:=\vll_1$, then 

\begin{enumerate}
\item $[0,1)\vll_i+[0,1)\vll_{i+1}$ is a fundamental domain for the lattice $\aid\subseteq\C\cong\R^2$ for all $i\in\{1,\dots,6\}$; 
\item we can write $\vll_i=(\rll_i,\thll_i)$ in polar coordinates for $i\in\{1,\dots,6\}$ so that if we set $\thll_7:=2\pi+\thll_1$, then $0\leq\thll_{i+1}-\thll_i\leq\frac\pi2$ for all $i\in\{1,\dots,6\}$;
\item $|\vll_i|_\ii\leq16\pi^{-2}\disck\norm(\aid)$ for all $i\in\{1,\dots,6\}$. 
\end{enumerate}
\end{lem}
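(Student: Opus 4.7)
My plan is to produce the six vectors as the $S_3$-symmetric completion of a Gauss-reduced basis of $\aid$ inside $\C \cong \R^2$, and to verify the three conditions separately. The bulk of the work will lie in the angle estimate (2); the basis property (1) should reduce to an $SL_2(\Z)$ determinant computation, while the length estimate (3) should follow by combining Minkowski's first theorem with the ideal-theoretic lower bound $|\alpha|_\ii \geq \norm(\aid)$ for every nonzero $\alpha \in \aid$ (valid because $(\alpha)\aid^{-1}$ is an integral ideal).

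First I would apply Minkowski's theorem to the lattice $\aid \subset \C$, whose Euclidean fundamental domain has area $\sqrt{\disck}\norm(\aid)/2$, to choose a nonzero $\vll_1 \in \aid$ of shortest Euclidean length $\rll_1$, and then take $\vll_2 \in \aid$ of shortest Euclidean length $\rll_2$ among those not proportional to $\vll_1$. After a rotation and a suitable sign change, I may arrange $\vll_1$ on the positive real axis, $\vll_2$ in the upper half plane, and $\vll_1 \cdot \vll_2 \geq 0$, so that $\thll_1 = 0$ and $\thll_2 = \theta$ with $\theta$ the angle between them. Standard Gauss reduction (using the minimality of $\vll_1$ and $\vll_2$) shows that $(\vll_1, \vll_2)$ is a $\Z$-basis of $\aid$ with $\rll_1 \leq \rll_2$ and $0 \leq \vll_1 \cdot \vll_2 \leq \rll_1^2/2$, forcing $\theta \in [\pi/3, \pi/2]$. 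I would then set $\vll_3 := \vll_2 - \vll_1$, $\vll_4 := -\vll_1$, $\vll_5 := -\vll_2$, $\vll_6 := \vll_1 - \vll_2$ and verify (1) by checking that each consecutive pair $(\vll_i, \vll_{i+1})$ is related to $(\vll_1, \vll_2)$ by a matrix of determinant $\pm 1$.

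The decisive step will be (2). I let $\phi := \arg(\vll_2 - \vll_1)$, so that $\thll_3 = \phi$ and $\thll_{i+3} = \pi + \thll_i$ by the sign symmetry $\vll_{i+3} = -\vll_i$. The Gauss reduction inequality rewrites as $2\rll_2 \cos\theta \leq \rll_1$, showing the real part of $\vll_2 - \vll_1$ is at most $-\rll_1/2 < 0$, whence $\phi \geq \pi/2$. Writing $t := \rll_1/\rll_2 \in (0,1]$, the identity $\tan(\pi - \phi) = \sin\theta/(t - \cos\theta)$ together with $\tan(\pi/2 - \theta) = \cos\theta/\sin\theta$ shows that $\phi \leq \theta + \pi/2$ is equivalent to $t\cos\theta \leq 1$, which is clear from $t \leq 1$ and $\cos\theta \leq 1/2$. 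Combined with $\theta \leq \pi/2$, this yields $\thll_{i+1} - \thll_i \in [0, \pi/2]$ for all six consecutive gaps. This angle bound is really the hard part of the lemma: without the Gauss-reduced property of $(\vll_1, \vll_2)$ a long $\vll_2$ could push $\vll_2 - \vll_1$ arbitrarily close in argument to $\vll_2$, leaving an unacceptable gap exceeding $\pi/2$ before $-\vll_1$.

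Finally, for (3), Minkowski gives $\rll_1^2 \leq 2\sqrt{\disck}\norm(\aid)/\pi$, and I would couple $\rll_1^2 \geq \norm(\aid)$ with the determinant identity $\rll_1 \rll_2 \sin\theta = \sqrt{\disck}\norm(\aid)/2$ and $\sin\theta \geq \sqrt{3}/2$ to obtain $\rll_2^2 \leq \disck\norm(\aid)/3$. Since $\vll_1 \cdot \vll_2 \geq 0$, the parallelogram identity gives $\rll_3^2 \leq \rll_1^2 + \rll_2^2$, and $\vll_4, \vll_5, \vll_6$ share the Euclidean lengths of $\vll_1, \vll_2, \vll_3$. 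Using $\disck \geq 3$ for every imaginary quadratic $\K$, each of these three bounds is at most $16\pi^{-2}\disck\norm(\aid)$ (recalling $|\vll|_\ii = \rll^2$ at the complex place), which establishes (3).
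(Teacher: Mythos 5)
Your proposal is correct and follows essentially the same approach as the paper: choose a Minkowski/Gauss-reduced basis of the lattice $\aid$, complete it to six vectors by appending the short diagonal of the fundamental parallelogram together with the negatives of all three, and take the resulting six cones as the partition. The only differences are cosmetic — you fix the sign convention $\vll_1\cdot\vll_2\geq0$ so the short diagonal is $\vll_2-\vll_1$ while the paper normalizes $\wll_1\cdot\wll_2\leq0$ and uses $\wll_1+\wll_2$, you derive the angle bound in (2) by an explicit tangent computation where the paper reads it off from the chain $|\wll_1|\leq|\wll_2|\leq|\wll_1+\wll_2|$ via the law of cosines, and for (3) you combine Minkowski's first theorem with the covolume identity where the paper invokes Minkowski's second theorem (Cassels, VIII.4.3) directly.
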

\begin{proof}
Let $\wll_1,\wll_2\in\aid\subseteq\R^2$ be $R$-linearly independent elements such that $|\wll_1|_\ii^{1/2}$ and $|\wll_2|_\ii^{1/2}$ are  the first and the second successive minimum, respectively, of the lattice $\aid\subseteq\R^2$ with respect to the unit ball. 
Let $\wll_3:=\wll_1+\wll_2$. 
For $i\in\{1,2,3\}$, write $\wll_i=(\rll'_i,\thll'_i)$ in polar coordinates with $\thll'_i\in[0,2\pi)$.
Without loss of generality we can assume that $\frac\pi2\leq|\thll'_1-\thll'_2|<\pi$.
Then the inequalities $|\wll_1|_\ii^{1/2}\leq|\wll_2|_\ii^{1/2}\leq|\wll_1+\wll_2|_\ii^{1/2}$ force $|\thll'_3-\thll'_1|\leq\frac\pi2$ and $|\thll'_3-\thll'_2|\leq\frac\pi3$.
Let $\ull_1:=\wll_1$, $\ull_2:=\wll_3$, $\ull_3:=\wll_2$, $\ull_4:=-\wll_1$, $\ull_5:=-\wll_3$ and $\ull_6:=-\wll_2$.
For $i\in\{1,\dots,6\}$, let $\vll_i:=\ull_i$ if $\thll'_2-\thll'_1>0$, and $\vll_i:=\ull_{7-i}$ if $\thll'_2-\thll'_1<0$.

We recall that the lattice $\aid\subseteq\R^2$ has determinant $\norm(\aid)\sqrt{\disck}/2$ (see \cite[Lemma V.2.2]{MR0282947} for example).
Then \cite[\S VIII.4.3]{MR0157947} gives 
\begin{equation*}|\wll_1|_\ii|\wll_2|_\ii\leq4\pi^{-2}\disck\norm(\aid)^2.\end{equation*} Hence, $|\wll_1|_\ii,|\wll_2|_\ii\leq4\pi^{-2}\disck\norm(\aid)$ 
as $|\wll_i|_\ii\geq\norm(\aid)$ for $i\in\{1,2\}$. Therefore, $|\vll_i|_\ii\leq2(|\wll_1|_\ii+|\wll_2|_\ii)\leq16\pi^{-2}\disck\norm(\aid)$ for all $i\in\{1,\dots,6\}$.
\end{proof}

\begin{lem}\label{latticesliceconsequence}
Let  $\lattice$ be a lattice in $\C\cong\R^2$. Let $\vll_1,\vll_2\in\lattice$ such that $[0,1)\vll_1+[0,1)\vll_{2}$ is a fundamental domain for $\lattice$ and there are $\theta_1,\theta_2\in\R$ with $0<\thll_{2}-\thll_1\leq\frac{\pi}2$ and $\vll_i=(\rll_i,\thll_i)$ for $i\in\{1,2\}$ in polar coordinates. Then
\begin{equation*}|\wll|_\ii\leq|\wll+x|_\ii<|\wll+\vll_{1}+\vll_{2}|_\ii
\end{equation*}
 for all
 $\wll\in\Z_{\geq0}\vll_1+\Z_{\geq0}\vll_2$ and all $x\in[0,1]\vll_1+[0,1]\vll_{2}$, $x\neq\vll_1+\vll_2$.
\end{lem}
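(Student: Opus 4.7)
The plan is to translate both inequalities into comparisons of squared Euclidean norms on $\R^2$: under the normalization of \S\ref{notation} at the complex place, $|z|_\ii = z\bar z = |z|^2$, so the statement reduces to $|\wll|^2 \leq |\wll+x|^2 < |\wll + \vll_1 + \vll_2|^2$. The key observation that makes the proof nearly mechanical is that the angular hypothesis $0 < \thll_2 - \thll_1 \leq \pi/2$ yields $\langle \vll_1, \vll_2 \rangle = \rll_1 \rll_2 \cos(\thll_2 - \thll_1) \geq 0$, so the standard inner product of any two nonnegative combinations of $\vll_1, \vll_2$ is nonnegative.

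I would write $\wll = a\vll_1 + b\vll_2$ with $a,b \in \Z_{\geq 0}$ and $x = s\vll_1 + t\vll_2$ with $s,t \in [0,1]$ and $(s,t) \neq (1,1)$. For the lower bound I would use
\begin{equation*}
|\wll + x|^2 - |\wll|^2 = |x|^2 + 2\langle \wll, x\rangle,
\end{equation*}
whose right-hand side expands into a sum of nonnegative multiples of $|\vll_1|^2$, $|\vll_2|^2$ and $\langle \vll_1, \vll_2\rangle$ and is thus $\geq 0$. For the upper bound, I would set $y := \vll_1 + \vll_2 - x = (1-s)\vll_1 + (1-t)\vll_2$, which is nonzero because $x \neq \vll_1 + \vll_2$, and apply the factorization $|A|^2 - |B|^2 = \langle A - B, A+B\rangle$ with $A = \wll + \vll_1 + \vll_2$ and $B = \wll + x$ to obtain
\begin{equation*}
|\wll + \vll_1 + \vll_2|^2 - |\wll + x|^2 = \langle y,\ 2\wll + x + \vll_1 + \vll_2\rangle.
\end{equation*}

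The main point is then to extract \emph{strict} positivity from this inner product; this is the only place where care is needed. The second factor $2\wll + x + \vll_1 + \vll_2$ has $\vll_1$-coefficient $2a + s + 1 \geq 1$ and $\vll_2$-coefficient $2b + t + 1 \geq 1$, both strictly positive, while $y$ has coefficients $1-s, 1-t \in [0,1]$, at least one of which is strictly positive since $y \neq 0$. Expanding the inner product produces a nonnegative combination of $|\vll_1|^2$, $|\vll_2|^2$ and $\langle \vll_1, \vll_2\rangle$ in which either the $|\vll_1|^2$-term or the $|\vll_2|^2$-term carries a strictly positive coefficient; since $\vll_1, \vll_2 \neq 0$ (they span a fundamental domain of $\lattice$), the inner product is strictly positive. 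Note that the only delicate case is $\wll = 0$, which is handled uniformly by retaining the extra $\vll_1 + \vll_2$ summand in the second factor rather than dropping it.
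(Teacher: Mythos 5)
Your proof is correct and is essentially the paper's argument in Cartesian rather than polar language: the paper applies the law of cosines to $\wll,x$ and then to $\wll+x,\vll_1+\vll_2-x$, which is exactly the inner-product identity $|a+b|^2=|a|^2+|b|^2+2\langle a,b\rangle$ with $\langle a,b\rangle\geq0$ forced by the angle hypothesis, while you expand the same inner products directly and extract strictness from the nonzero coefficients. Both hinge on the single observation that any two vectors in the cone spanned by $\vll_1,\vll_2$ have nonnegative inner product.
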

\begin{proof}
Let $\wll\in\Z_{\geq0}\vll_1+\Z_{\geq0}\vll_2$ and $x\in[0,1]\vll_1+[0,1]\vll_{2}$, $x\neq\vll_1+\vll_2$. We can write $\wll=(\eta,\theta)$ and $x=(\eta',\theta')$ in polar coordinates, with $\theta_1\leq\theta,\theta'\leq\theta_2$. Then
\begin{equation*}
|\wll+x|_\ii=|\wll|_\ii+|x|_\ii-2\eta\eta'\cos(\pi-\theta+\theta'),
\end{equation*}
and $\cos(\pi-\theta+\theta')\leq0$, as  $|\theta'-\theta|\leq\pi/2$. Similarly,
\begin{equation*}
|\wll+\vll_1+\vll_2|_\ii\geq|\wll+x|_\ii+|\vll_1+\vll_2-x|_\ii,
\end{equation*}
and $|\vll_1+\vll_2-x|_\ii>0$ as $x\neq\vll_1+\vll_2$.
\end{proof}

Before proceeding with the comparison between the cardinality of the sets $\Aad(\B)$ and the volume of $\Da(\B)$, we estimate the number of lattice points close to the border of the domains $\Da(\B)$ where we count them.

\subsection{Boundary estimation}

First comes a technical result analogous to  \cite[Sublemma 11.24]{MR1679841}. 

\begin{lem}\label{counting_lemma}
Let $\e,\e_1,\dots,\e_r\in\Z_{\geq0}$, $\e>0$, $\B\geq1$, $\constlemma\in(0,1]$ such that $\constlemma\B\geq1$. Let $\ee_r:=\max\{\e_1,\dots,\e_r\}/\e$. Then 
\begin{equation*}
\sum\prod_{i=1}^ry_i^{\frac{\e_i}{\e}-1}\leq\max\{1,\ee_r2^{\ee_r}\}(\constlemma\B)^{1/\e}(1+\log\B)^r
\end{equation*}
where the sum runs through the set of positive integers $y_1,\dots,y_r$ that satisfy 
\begin{equation*}
\max\{y_1,\dots,y_r\}\leq\B \text{ and }\prod_{i=1}^ry_i^{\e_i}\leq\constlemma\B.\end{equation*} 
\end{lem}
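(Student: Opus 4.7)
The plan is to prove the bound directly by a pointwise estimate of the summand, after which the sum decouples over the variables. The key observation is the algebraic identity
\begin{equation*}
\prod_{i=1}^r y_i^{\e_i/\e - 1} = \biggl(\prod_{i=1}^r y_i^{\e_i}\biggr)^{1/\e} \cdot \prod_{i=1}^r y_i^{-1},
\end{equation*}
which follows from the law of exponents. On the summation domain the first factor is at most $(\constlemma\B)^{1/\e}$ by the constraint $\prod y_i^{\e_i}\leq\constlemma\B$, so termwise
\begin{equation*}
\prod_{i=1}^r y_i^{\e_i/\e - 1} \leq (\constlemma\B)^{1/\e}\, \prod_{i=1}^r y_i^{-1}.
\end{equation*}

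Next I would discard the product constraint (which has already been absorbed into the pointwise bound) and enlarge the range of summation to the box $\{(y_1,\dots,y_r): 1\leq y_i\leq\B\}$. Since the $y_i$ are positive integers, Fubini separates variables and gives
\begin{equation*}
\sum \prod_{i=1}^r y_i^{\e_i/\e - 1} \leq (\constlemma\B)^{1/\e}\prod_{i=1}^r \sum_{y_i=1}^{\lfloor\B\rfloor} \frac{1}{y_i} \leq (\constlemma\B)^{1/\e}(1+\log\B)^r,
\end{equation*}
using the standard harmonic estimate $\sum_{y=1}^N 1/y \leq 1+\log N$ for $N\geq 1$.

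Finally, since $\max\{1,\ee_r 2^{\ee_r}\}\geq 1$, the claimed inequality follows immediately. There is essentially no obstacle in this argument: it is a single pointwise factorization followed by an elementary product decoupling, and it in fact yields a bound with leading constant $1$, which is tighter than the stated $\max\{1,\ee_r 2^{\ee_r}\}$. The hypothesis $\constlemma\B\geq 1$ plays its role only in the degenerate case $\e_i=0$ for all $i$, where it ensures $(\constlemma\B)^{1/\e}\geq 1$ so that the right-hand side dominates the unconditional bound $(1+\log\B)^r$.
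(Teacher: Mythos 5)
Your argument is correct and is genuinely different from—and cleaner than—the one in the paper. The paper proves the lemma by induction on $r$: it handles $r=1$ by distinguishing $\e_1=0$ from $\e_1>0$ (using an integral comparison in the latter case, which is where the factor $\ee_r 2^{\ee_r}$ enters), and then reduces $r>1$ to $r-1$ by peeling off the variable with the largest exponent, again splitting into cases according to whether $\e_r$ vanishes. Your observation that
\begin{equation*}
\prod_{i=1}^r y_i^{\e_i/\e-1}=\Bigl(\prod_{i=1}^r y_i^{\e_i}\Bigr)^{1/\e}\prod_{i=1}^r y_i^{-1}\leq(\constlemma\B)^{1/\e}\prod_{i=1}^r y_i^{-1}
\end{equation*}
on the summation domain collapses all of that into one line: you may then drop the product constraint, separate variables over the box $1\leq y_i\leq\B$, and apply the harmonic-sum bound. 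This avoids the induction entirely and yields the inequality with leading constant $1$ rather than $\max\{1,\ee_r 2^{\ee_r}\}$; the larger constant in the statement is simply an artifact of the paper's integral comparison in the base case and costs nothing downstream. One small remark: the hypothesis $\constlemma\B\geq1$ is never actually invoked in your argument—since all $y_i\geq1$ and $\e_i\geq0$ force $\prod y_i^{\e_i}\geq1$, the domain is empty and the sum is $0$ whenever $\constlemma\B<1$, so the bound is vacuous there; the hypothesis matters for the paper's case analysis but not for yours.
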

\begin{proof} The proof goes by induction on $r$. 
Let $\Sum(\constlemma,\B,\e;\e_1,\dots,\e_r)$ be the sum in the statement. 
Assume first that $r=1$. If $\e_1=0$, then
\begin{equation}\label{eq:counting_lemma}
\Sum(\constlemma,\B,\e;0)\leq1+\log\B\leq(\constlemma\B)^{1/\e}(1+\log\B)\end{equation} as $\constlemma\B\geq1$. If $\e_1>0$, then
\begin{equation*}\begin{split}
\Sum(\constlemma,\B,\e;\e_1)\leq\int_0^{(\constlemma\B)^{1/\e_1}+1}y^{\frac{\e_1}{\e}-1}\dd y&\leq\frac{\e_1}{\e}2^{\frac{\e_1}{\e}}(\constlemma\B)^{1/\e}\leq
\\ &\leq\ee_12^{\ee_1}(\constlemma\B)^{1/\e}(1+\log\B)\end{split}
\end{equation*}
as $(\constlemma\B)^{1/\e_1}\geq1$ and $\B\geq1$.
Assume now that $r>1$. If $\e_r=0$, then $\ee_{r-1}=\ee_{r}$ and
\begin{equation*}\begin{split}
\Sum(\constlemma,\B,\e;\e_1,\dots,\e_r)&=\Sum(\constlemma,\B,\e;\e_1,\dots,\e_{r-1})\Sum(\constlemma,\B,\e;0)\leq\\
&\leq\max\{1,\ee_r2^{\ee_r}\}(\constlemma\B)^{1/\e}(1+\log\B)^{r-1}(1+\log\B)
\end{split}\end{equation*}
by the induction assumption and \eqref{eq:counting_lemma}.
If $\e_1,\dots,\e_r>0$, then
\begin{equation*}\begin{split}
\Sum(\constlemma,\B,\e;\e_1,\dots,\e_r)&=\sum_{y_r\leq(\constlemma\B)^{1/\e_r}}y_r^{\frac{\e_r}{\e}-1}\Sum(y_r^{-e_r}\constlemma,\B ,\e;\e_1,\dots,\e_{r-1})\leq\\
&\leq\max\{1,\ee_r2^{\ee_r}\}(\constlemma\B)^{1/\e}(1+\log\B)^{r-1}\Sum(\constlemma,\B,\e;0)
\end{split}\end{equation*}
as $(\constlemma\B)^{1/\e_r}\leq\B$ and $\ee_{r-1}\leq\ee_r$. The expected result follows by \eqref{eq:counting_lemma}.
\end{proof}

The next proposition, inspired by \cite[Lemma 11.25(b)]{MR1679841}, estimates the number of lattice points near the border of certain subdomains of $\Da(\B)$.

\begin{prop}\label{partdarho}
Assume that $r>1$.
Let $\ua\in\clrep^r$, $\ud\in\idk^N$, $\B\geq1$ and $\tilde\rho\in\ray$. 
For every $\rho\in\ray$, let $\vll_{\rho,1},\vll_{\rho,2}\in\did_\rho\ua^\drho$ that satisfy 
Lemma \ref{latticesliceconsequence} for the lattice $\did_\rho\ua^\drho$ in $\C\cong\R^2$. 
Let 
\begin{equation*}
\Fund:=\bigoplus_{\rho\in\ray}([0,1)\vll_{\rho,1}+[0,1)\vll_{\rho,2}),
\end{equation*} 
and let $\darho(\Fund;\B)$  be the set
\begin{equation*}
\{\ux\in\Aad(\B)\cap\bigoplus_{\rho\in\ray}(\Z_{\geq0}\vll_{\rho,1}+\Z_{\geq0}\vll_{\rho,2}):\hsup(\ux+\g)>\norm(\ua^\can)\B\},
\end{equation*}
where $\g=(\g_\rho)_{\rho\in\ray}\in\C^N$ is defined by $\g_\rho:=0$ if $\rho\neq\tilde\rho$, $\g_{\tilde\rho}:=\vll_{\tilde\rho,1}+\vll_{\tilde\rho,2}$.
Then there exist positive constants $\constB_1$ and $\constantpartdarho$, both independent of $\B$ and $\ud$, such that
\begin{gather*}
\#\darho(\Fund;\B)\leq\constantpartdarho\norm(\ud)^{-1}\B(1+\log\B)^{r-2}\min\{\norm(\ud), 1+\log\B\}.
\end{gather*}
for all $\B\geq \constB_1$.
\end{prop}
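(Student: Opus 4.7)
The plan is to decompose $\darho(\Fund;\B)$ according to which maximal cone realises the excess $\hsup(\ux+\g)>\norm(\ua^\can)\B$, and for each such cone bound the contribution by a thin-shell count in the $\tilde\rho$-direction followed by an application of Lemma \ref{counting_lemma}.

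\textbf{Reduction to relevant cones.}
For $\ux\in\darho(\Fund;\B)$ pick $\sigma\in\maxc$ with $|(\ux+\g)^{\can(\sigma)}|_\ii>\norm(\ua^\can)\B$. Since $\ux\in\Aad(\B)$ forces $|\ux^{\can(\sigma)}|_\ii\leq\norm(\ua^\can)\B$, the shift $\g$ must strictly increase $|\ux^{\can(\sigma)}|_\ii$; as $\g$ is supported on the $\tilde\rho$-coordinate and $\asrho=0$ for all $\rho\in\sigma(1)$, this forces $\tilde\rho\notin\sigma(1)$ and $\astrho\geq 1$. Writing $\NN_\sigma$ for the corresponding subset of $\darho(\Fund;\B)$, it therefore suffices to bound $\#\NN_\sigma$ uniformly over such $\sigma$.

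\textbf{Thin shell in the $\tilde\rho$-direction.}
Fix such a $\sigma$ and a tuple $(x_\rho)_{\rho\neq\tilde\rho}$, and set $C:=(\norm(\ua^\can)\B/\prod_{\rho\neq\tilde\rho}|x_\rho|_\ii^{\asrho})^{1/\astrho}$, so that the defining condition of $\NN_\sigma$ reads $|x_{\tilde\rho}|_\ii\leq C<|x_{\tilde\rho}+\g_{\tilde\rho}|_\ii$. By Lemma \ref{latticesliceconsequence}, the map $(k_1,k_2)\mapsto|k_1\vll_{\tilde\rho,1}+k_2\vll_{\tilde\rho,2}|_\ii$ is monotone non-decreasing in each variable on $\Z_{\geq 0}^2$, so valid $x_{\tilde\rho}$ lie in a one-dimensional boundary band of the Euclidean disk of radius $\sqrt{C}$ intersected with the cone generated by $\vll_{\tilde\rho,1},\vll_{\tilde\rho,2}$. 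Using that this cone has angular width at most $\pi/2$ (Lemma \ref{idlatt_partition}) and that $\did_{\tilde\rho}\ua^{\dtrho}\subset\C$ has covolume $\sim\norm(\did_{\tilde\rho}\ua^{\dtrho})\sqrt{\disck}$, a standard arc-length-over-spacing estimate gives
\[\#\{x_{\tilde\rho}\text{ valid for the fixed }(x_\rho)_{\rho\neq\tilde\rho}\}\ =\ O\!\left(\sqrt{C/\norm(\did_{\tilde\rho}\ua^{\dtrho})}\,+\,1\right).\]

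\textbf{Summation and the $\min$ factor.}
After substituting $y_\rho:=|x_\rho|_\ii/\norm(\did_\rho\ua^{\drho})\geq 1$ for $\rho\neq\tilde\rho$ and using that the density of $\did_\rho\ua^{\drho}$-lattice points with respect to $|x_\rho|_\ii$ is $\sim\norm(\did_\rho\ua^{\drho})^{-1}$, the sum over the tuples $(x_\rho)_{\rho\neq\tilde\rho}$ of each of the two summands above falls within the scope of Lemma \ref{counting_lemma}, applied with $r-1$ variables, exponents $\asrho$, and a normalising constant coming from the product bound $\prod_{\rho\neq\tilde\rho}y_\rho^{\asrho}\leq\B/\norm(\ud^{\can(\sigma)})$ which follows from $\ux\in\Aad(\B)$ and $|x_{\tilde\rho}|_\ii\geq\norm(\did_{\tilde\rho}\ua^{\dtrho})$. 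The two summands yield complementary bounds of the shape $O(\norm(\ud)^{-1}\B(1+\log\B)^{r-1})$ and $O(\B(1+\log\B)^{r-2})$, the $\norm(\ud)$-powers emerging through the careful accounting of $\norm(\did_\rho\ua^{\drho})$-factors during the substitution. Summing over the finitely many admissible $\sigma$ and taking the minimum of the two bounds yields the stated $\constantpartdarho\norm(\ud)^{-1}\B(1+\log\B)^{r-2}\min\{\norm(\ud),1+\log\B\}$.

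\textbf{Principal obstacle.}
The crux of the argument is Step 2, the uniform thin-shell lattice count of shape $O(\sqrt{C/\norm(\did_{\tilde\rho}\ua^{\dtrho})}+1)$, which combines the monotonicity of Lemma \ref{latticesliceconsequence} with the small-vector and bounded-angle estimates of Lemma \ref{idlatt_partition}. Keeping track of how the $\norm(\did_\rho)$- and $\disck$-factors combine through the $y_\rho$-substitution, so that the two complementary bounds emerge with the correct $\norm(\ud)$-exponents and produce the $\min$-shape rather than a single looser estimate, is a secondary but delicate bookkeeping matter.
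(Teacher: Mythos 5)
Your overall architecture — decompose by maximal cone, perform a thin-shell count in the $\tilde\rho$-coordinate, then apply Lemma~\ref{counting_lemma} to the remaining variables, splitting into two complementary bounds to produce the $\min$-factor — is the paper's architecture. But there is a genuine gap in the cone decomposition that the rest of the argument silently inherits.

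You pick $\sigma$ so that $|(\ux+\g)^{\can(\sigma)}|_\ii>\norm(\ua^\can)\B$, i.e.\ merely one cone where the value exceeds the threshold. The paper instead requires $\pi(\ux+\g)\in\Csigma$, i.e.\ $\sigma$ is the cone where the supremum for $\ux+\g$ is achieved. This stronger condition is what produces, via Remark~\ref{rem:Csigma}, the inequality
\[
|x_\rho|_{\ii}\leq\prod_{\rho'\in\ray\smallsetminus\sigma(1)}|x_{\rho'}+\g_{\rho'}|_{\ii}^{-\deq_{\sigma,\drho}(n_{\rho'})}\quad\text{for all }\rho\in\sigma(1),
\]
which is the only source of an \emph{upper bound} on the $N-r$ coordinates $(x_\rho)_{\rho\in\sigma(1)}$ (for those $\rho$ one has $\asrho=0$, so they are invisible to every other constraint you use). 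Without that bound, the sum over $(x_\rho)_{\rho\in\sigma(1)}$ is unbounded: the ``density $\sim\norm(\did_\rho\ua^{\drho})^{-1}$'' heuristic does not converge because there is no ceiling. Concretely, your third step claims to sum over the tuples $(x_\rho)_{\rho\neq\tilde\rho}$ (which has $N-1$ components) and yet invokes Lemma~\ref{counting_lemma} with only $r-1$ variables; the $N-r$ coordinates indexed by $\sigma(1)$ have simply disappeared. To close the gap you must (as the paper does) restrict to $\pi(\ux+\g)\in\Csigma$, bound the number of admissible $(x_\rho)_{\rho\in\sigma(1)}$ by the product of the resulting intervals' lengths over the lattice covolumes, and then absorb that factor into the exponent shift $\B\mapsto\B^{1-\frac{1}{2\astrho}}$ before applying Lemma~\ref{counting_lemma} to the surviving $r-1$ variables indexed by $S=\ray\smallsetminus(\sigma(1)\cup\{\tilde\rho\})$. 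The thin-shell step and the final two-regime case distinction leading to the $\min$-factor are otherwise in line with the paper's argument.
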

\begin{proof}
For every $\sigma\in\maxc$, let $\darhos$ be the set of $\ux\in\darho(\Fund;\B)$ such that $\pi(\ux+\g)\in\Csigma$. Then $\#\darho(\Fund;\B)\leq\sum_{\sigma\in\maxc}\#\darhos$ and it suffices to show that for every $\sigma\in\maxc$ there are positive constants $\constB'$ and  $\constant'$, both independent of $\B$ and $\ud$,  such that for all $\B\geq\constB'$,
\begin{equation*}
\#\darhos\leq\constant'\norm(\ud)^{-1}\B(1+\log\B)^{r-2}\min\{\norm(\ud),1+\log\B\}.
\end{equation*}

Fix $\sigma\in\maxc$. 
 If $\ux\in\darho(\Fund;\B)$, then $\pi(\ux+\g)\in\Csigma$ if and only if 
 \begin{equation*}
 \prod_{\rho'\in\ray}|x_{\rho'}+\g_{\rho'}|_\ii^{\deq_{\sigma,\drho}(n_{\rho'})}\leq1
 \end{equation*} for all $\rho\in\sigma(1)$, and in this case 
 \begin{equation*}\hsup(\ux+\g)=\prod_{\rho\in\ray\smallsetminus\sigma(1)}|x_\rho+\g_\rho|_\ii^\asrho,\end{equation*}
 as recalled in Remark \ref{rem:Csigma}. Therefore, if $\astrho=0$ then $\darhos=\emptyset$ for all $\B>0$. Hence, we assume that $\astrho>0$.
 Thus, $\darhos$ is the set of $\ux\in\bigoplus_{\rho\in\ray}(\Z_{\geq0}\vll_{\rho,1}+\Z_{\geq0}\vll_{\rho,2})$ that satisfy
 \begin{align}
 &|x_{\rho}|_{\ii}\geq\norm(\did_\rho\ua^{\drho})\quad \forall\rho\in\ray,\label{darhos_cond_one}\\
 &|x_\rho|_{\ii}\leq\prod_{\rho'\in\ray\smallsetminus\sigma(1)}|x_{\rho'}+\g_{\rho'}|_{\ii}^{-\deq_{\sigma,\drho}(n_{\rho'})}\quad \forall\rho\in\sigma(1),\label{darhos_cond_two}\\
  &\hsup(\ux)\leq\norm(\ua^\can)\B,\label{darhos_cond_three}\\
  &\prod_{\rho\in\ray\smallsetminus\sigma(1)}|x_\rho+\g_\rho|_\ii^\asrho>\norm(\ua^\can)\B.\label{darhos_cond_four}
 \end{align}
 Let $\rminus:=\ray\smallsetminus(\sigma(1)\cup\{\tilde\rho\})$.
 Then Lemma \ref{11.20} , \eqref{darhos_cond_one} and  \eqref{darhos_cond_three}
 give
 \begin{equation}\label{setsum_condition}
\prod_{\rho\in\rminus}|x_\rho|_\ii\leq\frac{\norm(\ua^{\sum_{\rho\in\rminus}\drho})}{\prod_{\rho\in\ray\smallsetminus\rminus}\norm(\did_\rho)}\B \quad\text{and}\quad \prod_{\rho\in\rminus}|x_\rho|_\ii^\asrho\leq\frac{\norm(\ua^{\can-\astrho\dtrho})}{\norm(\did_{\tilde\rho})^{\astrho}}\B
 \end{equation}
 for every $\ux\in\darhos$. 
 Let $\setsum$ be the set of $(x_\rho)_{\rho\in\rminus}\in\bigoplus_{\rho\in\rminus}(\Z_{\geq0}\vll_{\rho,1}+\Z_{\geq0}\vll_{\rho,2})$ that satisfy \eqref{setsum_condition} and such that $x_\rho\neq0$ for all $\rho\in\rminus$.

 For every $\ux\in\darhos$, condition \eqref{darhos_cond_three} gives 
 \begin{equation}\label{eq:condition_xrhotilde}
 |x_{\tilde\rho}|_\ii\leq\left(\frac{\norm(\ua^\can)\B}{\prod_{\rho\in\rminus}|x_\rho|_\ii^\asrho}\right)^\frac1{\astrho}.
 \end{equation}  
Moreover,  \eqref{darhos_cond_four} can be written as
 \begin{equation*}
|x_{\tilde\rho}+\g_{\tilde\rho}|_\ii^\astrho \prod_{\rho\in\rminus}|x_\rho|_\ii^\asrho>\norm(\ua^\can)\B,
 \end{equation*} and together with the triangular inequality for $|\cdot|_\ii^{1/2}$ gives
 \begin{equation}\label{eq:condition_xrhotilde_second}
 |x_{\tilde\rho}|_\ii>\left(\left(\frac{\norm(\ua^\can)\B}{\prod_{\rho\in\rminus}|x_\rho|_\ii^\asrho}\right)^{\frac1{2\astrho}}-|\g_{\tilde\rho}|_\ii^{\frac12}\right)^2.
 \end{equation}
 Therefore, 
  for every $(x_\rho)_{\rho\in\rminus}\in\setsum$ there are at most
 \begin{equation*}
\frac{\pi|\g_{\tilde\rho}|_\ii^{\frac12}}{\norm(\did_{\tilde\rho}\ua^\dtrho)}\left(\frac{\norm(\ua^\can)\B}{\prod_{\rho\in\rminus}|x_\rho|_\ii^\asrho}\right)^\frac1{2\astrho}
\end{equation*}
 elements of $\Z_{\geq0}\vll_{\tilde\rho,1}+\Z_{\geq0}\vll_{\tilde\rho,2}$ that satisfy the conditions \eqref{eq:condition_xrhotilde} and \eqref{eq:condition_xrhotilde_second}. 
  
 By \eqref{darhos_cond_two}, for every $(x_\rho)_{\rho\in\rminus}\in\setsum$ and $x_{\tilde\rho}$ as above, there are at most
 \begin{equation*}
 \boundnumbersigma:=\prod_{\rho\in\sigma(1)}\left(\frac{\constantO}{\norm(\did_\rho\ua^\drho)}\prod_{\rho'\in\ray\smallsetminus\sigma(1)}|x_{\rho'}+\g_{\rho'}|_{\ii}^{-\deq_{\sigma,\drho}(n_{\rho'})}\right)
 \end{equation*}
 elements $(x_{\rho})_{\rho\in\sigma(1)}\in\bigoplus_{\rho\in\sigma(1)}(\Z_{\geq0}\vll_{\rho,1}+\Z_{\geq0}\vll_{\rho,2})$ such that $(x_\rho)_{\rho\in\ray}\in\darhos$, where $\constantO=(\pi+64\disck\pi^{-1})/2$.
 The equality 
$ \sum_{\rho\in\sigma(1)}\drho(\sigma)=\can(\sigma)-\sum_{\rho\in\ray\smallsetminus\sigma(1)}\drho$ together with the triangular inequality for $|\cdot|_\ii^{1/2}$, Lemma \ref{idlatt_partition} and \eqref{darhos_cond_one} gives
 \begin{equation*}\begin{split}
 \boundnumbersigma&=\frac{\constantO^{N-r}}{\prod_{\rho\in\sigma(1)}\norm(\did_\rho\ua^\drho)}\prod_{\rho\in\ray\smallsetminus\sigma(1)}|x_{\rho}+\g_{\rho}|_{\ii}^{\asrho-1}=\\
&=\frac{\constantO^{N-r}}{\prod_{\rho\in\sigma(1)}\norm(\did_\rho\ua^\drho)}\left(\frac{|x_{\tilde\rho}+\g_{\tilde\rho}|_\ii}{|x_{\tilde\rho}|_\ii}\right)^{\astrho-1}\prod_{\rho\in\ray\smallsetminus\sigma(1)}|x_{\rho}|_{\ii}^{\asrho-1}\leq\\
&\leq\frac{\constantO^{N-r}}{\prod_{\rho\in\sigma(1)}\norm(\did_\rho\ua^\drho)}\left(1+\frac{4\disck^{\frac12}}{\pi}\right)^{2(\astrho-1)}\prod_{\rho\in\ray\smallsetminus\sigma(1)}|x_{\rho}|_{\ii}^{\asrho-1}.
 \end{split}
 \end{equation*} 
 Then \eqref{eq:condition_xrhotilde} above gives
 \begin{equation*}
  \boundnumbersigma\leq\frac{\constantO^{N-r}(\norm(\ua^\can)\B)^\frac{\astrho-1}{\astrho}}{\prod_{\rho\in\sigma(1)}\norm(\did_\rho\ua^{\drho})}\left(1+\frac{4\disck^{\frac12}}{\pi}\right)^{2(\astrho-1)}\prod_{\rho\in\rminus}|x_{\rho}|_{\ii}^{\frac{\asrho}{\astrho}-1}.
 \end{equation*}
 Hence, there is a positive constant $\constant$ independent of $\B$ and $\ud$ such that  for every $(x_{\rho})_{\rho\in\rminus}\in\setsum$ there are at most 
 \begin{equation*}
 \frac{\constant|\g_{\tilde\rho}|_\ii^{\frac12} \B^{1-\frac1{2\astrho}}}{\prod_{\rho\in\ray\smallsetminus\rminus}\norm(\did_\rho)}\prod_{\rho\in\rminus}|x_\rho|_\ii^{\frac{\asrho}{2\astrho}-1}
 \end{equation*}
  elements $
  (x_\rho)_{\rho\in\ray\smallsetminus\rminus}\in\bigoplus_{\rho\in\ray\smallsetminus\rminus}(\Z_{\geq0}\vll_{\rho,1}+\Z_{\geq0}\vll_{\rho,2})$
  such that $(x_\rho)_{\rho\in\ray}\in\darhos$.
 
For every $\rho\in\rminus$, let $y_\rho:=|x_\rho|_\ii/\norm(\did_\rho\ua^\drho)$.
Then
\begin{equation}\label{eq:darhos_inequality}
\#\darhos\leq\frac{(2\constantO)^{r-1}\constant\norm(\ua^{\sum_{\rho\in\rminus}(\frac{\asrho}{2\astrho}-1)\drho})|\g_{\tilde\rho}|^{\frac12}_\ii}{\norm(\ud)\prod_{\rho\in\rminus}\norm(\did_\rho)^{-\frac{\asrho}{2\astrho}}}\B^{1-\frac1{2\astrho}}\sum\prod_{\rho\in\rminus}y_\rho^{\frac{\asrho}{2\astrho}-1},
\end{equation}
where the sum runs through the set of $(y_\rho)_{\rho\in\rminus}\in(\Z_{>0})^{r-1}$ that satisfy $\prod_{\rho\in\rminus}y_\rho\leq\norm(\ud)^{-1}\B$ and $
\prod_{\rho\in\rminus}y_\rho^\asrho\leq(\prod_{\rho\in\ray\smallsetminus\sigma(1)}\norm(\did_\rho)^{-\asrho})\B$.
If $\Aad(\B)=\emptyset$, then $\darhos=\emptyset$. Hence, we assume that $\Aad(\B)\neq\emptyset$. Then $\norm(\ud)\leq\B$ by Proposition \ref{finiteness}, and $\prod_{\rho\in\ray\smallsetminus\sigma(1)}\norm(\did_\rho)^{\asrho}\leq\B$ by \eqref{darhos_cond_one} and \eqref{darhos_cond_three} and the fact that $\asrho=0$ for all $\rho\in\sigma(1)$.

If $\norm(\ud)\prod_{\rho\in\ray\smallsetminus\sigma(1)}\norm(\did_\rho)^{-\asrho}\leq1$, by Lemma \ref{counting_lemma} and Lemma \ref{idlatt_partition} there is a positive constant $\constant''$ independent of $\B$ and $\ud$ such that 
 \begin{equation*}
 \#\darhos
 \leq
 \constant''\norm(\ud)^{-1}\B(1+\log\B)^{r-1}. 
\end{equation*} 

If $\norm(\ud)\prod_{\rho\in\ray\smallsetminus\sigma(1)}\norm(\did_\rho)^{-\asrho}\geq1$, then the inequality \eqref{eq:darhos_inequality} holds with the sum running through the set of $(y_\rho)_{\rho\in\rminus}\in(\Z_{>0})^{r-1}$ that satisfy 
\begin{equation*}\prod_{\rho\in\rminus}y_\rho, \quad\prod_{\rho\in\rminus}y_\rho^\asrho\quad\leq\quad\B\prod_{\rho\in\ray\smallsetminus\sigma(1)}\norm(\did_\rho)^{-\asrho}. \end{equation*}
By Lemma \ref{counting_lemma} and Lemma \ref{idlatt_partition} there exists a positive constant $\constant'''$ independent of $\B$ and $\ud$ such that
 \begin{equation*}
 \#\darhos
 \leq
 \constant'''\norm(\ud)^{-1}\B(1+\log\B)^{r-1}. 
\end{equation*}

For every $\rho\in\rminus$, let $z_\rho:=|x_\rho|_\ii/\norm(\ua^\drho)$. We use Lemma \ref{idlatt_partition} to estimate $|\g_{\tilde\rho}|_\ii$. Then
\begin{equation*}
\#\darhos\leq
\frac{(2\constantO)^{r-1}\constant4\pi^{-1}\disck^{\frac12}\norm(\did_{\tilde\rho}\ua^{\dtrho})^{\frac12}}
{\norm(\ua^{\sum_{\rho\in\rminus}(1-\frac{\asrho}{2\astrho})\drho})\prod_{\rho\in\ray\smallsetminus\rminus}\norm(\did_\rho)}
\B^{1-\frac1{2\astrho}}\sum\prod_{\rho\in\rminus}z_\rho^{\frac{\asrho}{2\astrho}-1},
\end{equation*}
where the sum runs through the set of $(z_\rho)_{\rho\in\rminus}\in(\Z_{>0})^{r-1}$ that satisfy 
\begin{equation*}\prod_{\rho\in\rminus}z_\rho, \quad\prod_{\rho\in\rminus}z_\rho^\asrho\quad\leq\quad\B. \end{equation*}
We recall that $\norm(\did_{\tilde\rho})^{1/2}\prod_{\rho\in\ray\smallsetminus\rminus}\norm(\did_\rho)^{-1}\leq1$. By \cite[Sublemma 11.24]{MR1679841} 
there exist positive constants $\constB'\geq1$ and $\constant''''$, both independent of $\B$ and $\ud$, such that
 \begin{equation*}
 \#\darhos
 \leq
 \constant''''\B(1+\log\B)^{r-2} \quad \text{for all }\B\geq\constB'.
\end{equation*} Take $\constant':=\max\{\constant'',\constant''',\constant''''\}$.
\end{proof}

\subsection{Lattice point counting}
The next proposition compares the cardinality of the sets $\Aad(\B)$ with the volume of $\Da(\B)$ computed above.

\begin{prop}\label{lemnAaBint}
Assume that $r>1$. Let $\ua\in\clrep^r$, $\ud\in\idk^N$. Then there are positive constants $\constB_2$ and $\constantAa$, both independent of $\ud$ and of $\B$, such that 
\begin{equation*}
\left|\nAad(\B)-\int_{\Da(\B)}\dadux\right|\leq\constantAa\norm(\ud)^{-1}\B(1+\log\B)^{r-2}\min\{\norm(\ud),1+\log\B\}.
\end{equation*}
for all $\B\geq\constB_2$. 
\end{prop}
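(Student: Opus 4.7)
I would compare $\nAad(\B)$ with $\int_{\Da(\B)}\dadux$ cone by cone on an adapted polyhedral partition of $\C^N$ and reduce the discrepancy to the boundary count of Proposition \ref{partdarho} via a telescoping argument.

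For each $\rho\in\ray$ apply Lemma \ref{idlatt_partition} to the lattice $\did_\rho\ua^\drho\subseteq\C\cong\R^2$ to obtain $\vll_{\rho,1},\dots,\vll_{\rho,6}$ whose consecutive pairs span fundamental domains of the lattice and generate the six cones $C_{\rho,j}:=\R_{\geq0}\vll_{\rho,j}+\R_{\geq0}\vll_{\rho,j+1}$ covering $\C$. Their products yield $6^N$ cones $C=\prod_{\rho\in\ray}C_{\rho,j_\rho}$ covering $\C^N$, each equipped with the parallelotope $\Fund_C:=\bigoplus_\rho([0,1)\vll_{\rho,j_\rho}+[0,1)\vll_{\rho,j_\rho+1})$, a fundamental domain for $\lattice:=\bigoplus_\rho\did_\rho\ua^\drho$ of volume $1$ under $\dadux$. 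Fix such a $C$, write its chosen basis as $\vll_{\rho,1},\vll_{\rho,2}$, and put $M(w):=\mathbf{1}_{\Aad(\B)}(w)-\vol((w+\Fund_C)\cap\Da(\B))$ for $w\in\lattice\cap C$. Then, using that $\{w+\Fund_C\}_{w\in\lattice\cap C}$ tiles $C$ and that codimension-one common faces between cones carry only an $(N-1)$-dimensional sublattice,
\begin{equation*}
\nAad(\B)-\int_{\Da(\B)}\dadux=\sum_C\sum_{w\in\lattice\cap C}M(w)+O\bigl(\norm(\ud)^{-1}\B(1+\log\B)^{r-2}\bigr).
\end{equation*}

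Next, monotonicity forces $M(w)$ to vanish away from a thin boundary strip. Lemma \ref{latticesliceconsequence} yields $|w_\rho|_\ii\leq|w_\rho+x_\rho|_\ii\leq|w_\rho+\vll_{\rho,1}+\vll_{\rho,2}|_\ii$ for every $\rho\in\ray$ and every $x\in\Fund_C$. Global generation of the anticanonical sheaf forces $\asrho\geq0$ for all $\sigma\in\maxc$ and $\rho\in\ray$, so $\hsup(y)=\sup_{\sigma\in\maxc}\prod_\rho|y_\rho|_\ii^{\asrho}$ is non-decreasing in each $|y_\rho|_\ii$. With $\g:=\sum_\rho(\vll_{\rho,1}+\vll_{\rho,2})$, it follows that, for $w\in\lattice\cap C$ with all $w_\rho\neq0$: the condition $\hsup(w+\g)\leq\norm(\ua^\can)\B$ implies $w+\Fund_C\subseteq\Da(\B)$, hence $M(w)=0$; and the condition $\hsup(w)>\norm(\ua^\can)\B$ implies $(w+\Fund_C)\cap\Da(\B)=\emptyset$, again $M(w)=0$. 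Hence $M(w)\neq0$ only if either some $w_\rho=0$ (axis term, absorbed in the $O(\cdot)$ above), or
\begin{equation*}
\hsup(w)\leq\norm(\ua^\can)\B<\hsup(w+\g).
\end{equation*}

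The crux is to bound the set $E_C$ of such sandwiched points by the one-coordinate boundary sets $\darho(\Fund_C;\B)$ of Proposition \ref{partdarho} via a telescoping. Fix an enumeration $\rho_1,\dots,\rho_N$ of $\ray$ and decompose $\g=\sum_{i=1}^N\g^{(i)}$, with $\g^{(i)}\in\C^N$ supported on coordinate $\rho_i$ and equal to $\vll_{\rho_i,1}+\vll_{\rho_i,2}$ there. For $w\in E_C$, the non-decreasing sequence $\hsup(w+\g^{(1)}+\dots+\g^{(m)})$, $m=0,\dots,N$, crosses $\norm(\ua^\can)\B$ at a unique smallest $k=k(w)$. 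Set $z(w):=w+\sum_{i<k}\g^{(i)}$ and $\tilde\rho(w):=\rho_k$. Then $z(w)\in\Aad(\B)\cap C$ and $\hsup(z(w)+\g^{(k)})>\norm(\ua^\can)\B$, so $z(w)\in\darho(\Fund_C;\B)$; the map $w\mapsto(z(w),\tilde\rho(w))$ is injective because the canonical enumeration lets us recover $w=z(w)-\sum_{i<k}\g^{(i)}$ once $k$ is determined by $\tilde\rho=\rho_k$. Summing over the $6^N$ cones and the $N$ choices of $\tilde\rho$ and invoking Proposition \ref{partdarho} for $\B\geq\constB_1$ yields
\begin{equation*}
\sum_C\#E_C\leq 6^N\,N\,\constantpartdarho\,\norm(\ud)^{-1}\B(1+\log\B)^{r-2}\min\{\norm(\ud),1+\log\B\},
\end{equation*}
which dominates the $O(\cdot)$ axis/face error and supplies the desired bound, with $\constantAa$ an explicit multiple of $6^NN\constantpartdarho$ plus the axis constant, and $\constB_2:=\constB_1$. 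The main obstacle is precisely this telescoping: its validity rests on the coordinate-wise monotonicity of $\hsup$, which in turn depends on $\asrho\geq0$ as ensured by global generation of the anticanonical sheaf, and on choosing the fundamental domain $\Fund_C$ compatibly with the cone (as in Lemma \ref{latticesliceconsequence}), so that shifting by any positive lattice direction can only enlarge $\hsup$.
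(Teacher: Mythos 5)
Your proposal follows essentially the same route as the paper: partition $\C^N$ into the $6^N$ cones produced by Lemma \ref{idlatt_partition}, use Lemma \ref{latticesliceconsequence} to get coordinate-wise monotonicity of $\hsup$ along each cone, compare the lattice count with the volume cone by cone, and feed the boundary stratum into Proposition \ref{partdarho}. Your telescoping argument for bounding the sandwich set $E_C$ is a nice sharpening: the paper instead observes that $\ux\in\FiB$ implies $\ux+\g\in\darho(\Funi;\B)$ for some $\g\in\bigoplus_\rho\{0,\vll_{\rho,i_\rho}+\vll_{\rho,i_\rho+1}\}$, paying a $2^N$ overhead, whereas your canonical enumeration makes the shift determined by $\tilde\rho$ and hence injective. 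Both yield the right order of magnitude.

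There is, however, a genuine gap in your treatment of the ``axis'' and ``face'' contributions. You assert that lattice points $w$ with some $w_\rho=0$, and the double-counting along codimension-one cone faces, contribute $O(\norm(\ud)^{-1}\B(1+\log\B)^{r-2})$, and dismiss them. This bound is not justified and is in fact almost certainly unattainable as stated: these contributions are \emph{not} of lower order than the boundary term. The paper has to work to control them. Concretely, it bounds the face lattice points $\Ri$ by the boundary set $\FiB$ via a shift along $\vll_{\rho,i_\rho}$ (every $\ux\in\Ri$ can be translated by some $m\vll_{\rho,i_\rho}$ until it lands in $\FiB$, and this map is injective), and it bounds the axis/near-origin volume contribution by splitting off a bounded piece (volume $\leq(2\pi\disck^{-1/2})^N$) and mapping the remaining parallelotopes, after a shift by the diagonal, into the dilated face set $L_{\ui,\tilde\rho}(\constRi\B)$, which is again dominated by $\#\Fis(\constRi\B)$. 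Both steps ultimately invoke Proposition \ref{partdarho} a second time and produce the full error $\norm(\ud)^{-1}\B(1+\log\B)^{r-2}\min\{\norm(\ud),1+\log\B\}$, not the smaller quantity you claim. Your argument is repairable since the target bound carries the $\min$ factor anyway, but as written the absorption of the axis/face errors into a smaller $O(\cdot)$ is a missing piece that needs its own argument of the type just described.
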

\begin{proof}
If $\norm(\ud)>\B$, then $\nAad(\B)=0$ by Proposition \ref{finiteness}. By Proposition \ref{integralevaluation} there are positive constants
$\constB'$ and $\constt'$, both independent of $\ud$ and of $\B$, such that $\int_{\Da(\B)}\dadux\leq\constt'\norm(\ud)^{-1}\B(\log\B)^{r-1}$ for all $\B\geq\constB'$. 

Assume now that $\norm(\ud)\leq\B$. 
For every $\rho\in\ray$, let
$\vll_{\rho,1},\dots,\vll_{\rho,6}\in\did_\rho\ua^{\drho}$ that satisfy Lemma \ref{idlatt_partition}, and let  $\vll_{\rho,7}:=\vll_{\rho,1}$.
Let $\NN:=\{1,\dots,6\}^{\ray}$. 

For every $\ui\in\NN$, let $\SBi:=\Da(\B)\cap\bigoplus_{\rho\in\ray}(\R_{\geq0}\vll_{\rho,i_\rho}+\R_{\geq0}\vll_{\rho,i_\rho+1})$. Then,
\begin{equation}\label{eq:partition_expression_integral}
\int_{\Da(\B)}\dadux=\sum_{\ui\in\NN}\int_{\SBi}\dadux.
\end{equation}

Recall that $\Aad(\B)=\Da(\B)\cap\bigoplus_{\rho\in\ray}\did_{\rho}\ua^{\drho}$.
Fix $\ui\in\NN$. Let $\Aadi:=\Aad(\B)\cap\SBi$. Since $\bigcup_{\ui\in\NN}\SBi=\Da(\B)$, we can compute $\nAad(\B)$ using the inclusion-exclusion principle.
For every $\ui,\ui'\in\NN$, $\ui\neq\ui'$, there exists $\rho\in\ray$ and $\underline j\in\{\ui,\ui'\}$ such that $\Aadi\cap A_{\ui'}\subseteq L_{\underline j,\rho}(\B)$, where
\begin{equation*}
L_{\underline j,\rho}(\B):=(\Z_{\geq0}\vll_{\rho,j_{\rho}+1}\oplus\bigoplus_{\rho'\in\ray\smallsetminus\{\rho\}}(\Z_{\geq0}\vll_{\rho',j_{\rho'}}+\Z_{\geq0}\vll_{\rho',j_{\rho'}+1}))\cap\Da(\B).
\end{equation*}
Then
\begin{equation}\label{eq:AadB-sum}
\left|\nAad(\B)-\sum_{\ui\in\NN}\NaBid\right|\leq2^{\#\NN}\max_{\ui\in\NN,\rho\in\ray}\#\Ri,\end{equation}
and
\begin{equation*}
\left|\nAad(\B)-\int_{\Da(\B)}\dadux\right|\leq2^{\#\NN}\max_{\ui\in\NN,\rho\in\ray}\#\Ri+\sum_{\ui\in\NN}\left|\NaBid-\int_{\SBi}\dadux\right|.
\end{equation*}

Fix $\ui\in\NN$. 
We compare $\NaBid$ with the volume of $\SBi$ by counting the number of translated fundamental domains 
\begin{equation*}\Funi:=\bigoplus_{\rho\in\ray}([0,1)\vll_{\rho,i_{\rho}}+[0,1)\vll_{\rho,i_\rho+1})
\end{equation*}
 of $\bigoplus_{\rho\in\ray}\did_\rho\ua^\drho$ contained in $\SBi$ and those that intersect the boundary 
 \begin{equation*}
 \boundarySBi:=\{\ux\in\SBi:\hsup(\ux)=\norm(\ua^\can)\B\}.
 \end{equation*}
 Let $\ux$ be an element of \begin{equation*}
 \Li:=\bigoplus_{\rho\in\ray}(\Z_{\geq0}\vll_{\rho,i_{\rho}}+\Z_{\geq0}\vll_{\rho,i_{\rho}+1}).\end{equation*} 
By Lemma \ref{latticesliceconsequence}, 
\begin{equation*}|x_\rho|_\ii\leq|x_\rho+x'_\rho|_\ii<|x_\rho+\vll_{\rho,i_\rho}+\vll_{\rho,i_\rho+1}|_\ii\end{equation*} for all $\rho\in\ray$ and   all $\ux'\in\Funi$. Let $\gi:=(\vll_{\rho,i_\rho}+\vll_{\rho,i_\rho+1})_{\rho\in\ray}$. Then
\begin{equation}
\hsup(\ux)\leq\hsup(\ux+\ux')<\hsup(\ux+\gi)\label{hhineq}
\end{equation}
for  all $\ux'\in\Funi$. Hence,   $\ux+\Funi$ intersects $\boundarySBi$ if and only if 
\begin{equation*}
\hsup(\ux)\leq\norm(\ua^\can)\B<\hsup(\ux+\gi).
\end{equation*}
Let 
\begin{gather*}
\Iid:=\{\ux\in\Aadi:\ux+\Funi\subseteq\SBi\},\\
\FiB:=\{\ux\in\Aadi:\hsup(\ux+\gi)>\norm(\ua^\can)\B\}.\end{gather*} 
 Then $\#\Aadi=\#\Iid+\#\FiB$ and $\int_{\SBi}\dadux\geq\#\Iid$.

Write $\int_{\SBi}\dadux=\int_{\SBi'}\dadux+\int_{\SBi\smallsetminus\SBi'}\dadux$ with
\begin{equation*}
\SBi':=\SBi\cap\{\ux\in\C^N:|x_\rho|_\ii\geq\norm(\did_\rho\ua^\drho),\ \forall\rho\in\ray\}.
\end{equation*}
Then
\begin{align*}
&\int_{\SBi\smallsetminus\SBi'}\dadux\leq\int_{\{\ux\in\C^N:|x_\rho|_\ii\leq\norm(\did_\rho\ua^\drho),\ \forall\rho\in\ray\}}\dadux=(2\pi\disck^{-1/2})^N,\\
&\int_{\SBi'}\dadux\leq\#\{\ux\in\Li:(\ux+\Funi)\cap\SBi'\neq\emptyset\}.
\end{align*}

Let $\ux\in\Li$ such that $(\ux+\Funi)\cap\SBi'\neq\emptyset$. Then $\hsup(\ux)\leq\norm(\ua^\can)\B$ by \eqref{hhineq}. If $\ux\notin\Aadi$, then there exists $\tilde\rho\in\ray$ such that $x_{\tilde\rho}=0$.
Let $\underline z\in\Li$ be defined by $z_\rho:=\vll_{\rho,i_{\rho}+1}$ for all $\rho\in\ray$. Then $|z_\rho|_\ii\leq16\pi^{-2}\disck\norm(\did_\rho\ua^\drho)$ for all $\rho\in\ray$ by Lemma \ref{idlatt_partition}. 
Let $\underline y\in(\ux+\Funi)\cap\SBi'$. For all $\rho\in\ray$, then $|x_\rho|_\ii\leq|y_\rho|_\ii$ by Lemma \ref{latticesliceconsequence}, and $|y_\rho|_\ii\geq\norm(\did_\rho\ua^\drho)$ since $\underline y\in\SBi'$. Hence
\begin{gather*}
|x_\rho+z_\rho|_\ii\leq2(|x_{\rho}|_\ii+|z_\rho|_\ii)\leq2(|y_\rho|_\ii+16\pi^{-2}\disck\norm(\did_\rho\ua^\drho))\leq\\
\leq2(1+16\pi^{-2}\disck)|y_\rho|_\ii.
\end{gather*}
Then $\ux+\underline z\in L_{\ui,\tilde\rho}(\constRi\B)$, where $\constRi:=(2(1+16\pi^{-2}\disck))^{\max_{\sigma\in\maxc}\sum_{\rho\in\ray}\asrho}$, and 
\begin{equation*}
\int_{\SBi'}\dadux\leq\#\Aadi+\sum_{\rho\in\ray}\#\Ris(\constRi\B).
\end{equation*}

For every $\rho\in\ray$ and every $\ux\in\Ri$ there exists an integer $m\geq0$ such that $\ux+m\vll_{\rho,i_\rho}\in\FiB$. Hence, $\#\Ri\leq\#\FiB$.
Thus,
\begin{equation*}
\int_{\SBi}\dadux\leq\#\Iid+(2\pi\disck^{-1/2})^N+\#\FiB+N\#\Fis(\constRi\B),
\end{equation*}
and
\begin{equation*}
\left|\#\Aadi-\int_{\SBi}\dadux\right|\leq(2\pi\disck^{-1/2})^N+(N+2)\max\{\#\Fis(\B),\#\Fis(\constRi\B)\},
\end{equation*}
and
\begin{multline*}
\left|\#\Aad(\B)-\int_{\Da(\B)}\dadux\right|\leq\\
\leq\#\NN(2\pi\disck^{-1/2})^N+(2^{\#\NN}+\#\NN(N+2))\max_{\ui\in\NN}\{\#\Fis(\B),\#\Fis(\constRi\B)\}.
\end{multline*}

It remains to estimate $\max_{\ui\in\NN}\{\#\Fis(\B),\#\Fis(\constRi\B)\}$.
Let $\ui\in\NN$. For every $\ux\in\FiB$, there exists $\g\in\bigoplus_{\rho\in\ray}\{0,\vll_{\rho,i_\rho}+\vll_{\rho,i_\rho+1}\}$ and $\tilde\rho\in\ray$ such that $\ux+\g\in\darho(\Funi;\B)$. Hence, 
$\#\FiB\leq2^N\sum_{\tilde\rho\in\ray}\#\darho(\Funi;\B)$. 

By Proposition \ref{partdarho} there exist  constants $\constB''\geq1$ and $\constt''\geq\pi^N$, both independent of $\ud$ and $\B$, such that 
\begin{equation*}
\#\darho(\Funi;\B)\leq\constt''\norm(\ud)^{-1}\B(1+\log\B)^{r-2}\min\{\norm(\ud), 1+\log\B\}
\end{equation*}
for all $\B\geq\constB''$, all $\tilde\rho\in\ray$ and all $\ui\in\NN$. 
Since $\constRi\geq1$,
\begin{multline*}
\max_{\ui\in\NN}\{\#\FiB,\#\Fis(\constRi\B)\}\leq\\
\leq N2^N\constt''\norm(\ud)^{-1}\constRi\B(1+\log(\constRi\B))^{r-2}\min\{\norm(\ud),1+\log(\constRi\B)\}
\end{multline*}
for all $\B\geq\constB''$. 
Take $\constB_2:=\max\{\constB',\constB''\}$ and
\begin{equation*}
\constantAa:=\max\{\constt', (2^{\#\NN}+\#\NN(N+2))N2^{N+1}\constt''\constRi(1+\log\constRi)^{r-1}\}.\qedhere
\end{equation*}
\end{proof}

\begin{proof}[Proof of Theorem \ref{maintheo}]
For $r=1$ see \cite{MR557080}. For $r>1$, 
 Proposition \ref{cardexpr} gives
 \begin{equation*}
\card(\B)=\frac{1}{\crOk^r}\sum_{\ua\in\clrep^r}\nCa(\B).
\end{equation*}
Let $\constantAa$ and $\constB_2$ be the constants in Proposition \ref{lemnAaBint}. 
For all $\B\geq\constB_2$,   Proposition \ref{CaB}, \eqref{kappa}, Proposition \ref{lemnAaBint} and Proposition \ref{mobius_function} \ref{moblowpartsum} and \ref{mobuppartsum}  give
 \begin{equation*}\begin{split}
 &\left|\nCa(\B)-\ka\int_{\Da(\B)}\daux\right|\leq\sum_{\ud\in\idk^N}|\mob(\ud)|\left|\nAad(\B)-\int_{\Da(\B)}\dadux\right|
 \leq\\
 &\leq\constantAa\B(1+\log\B)^{r-2}\left(\sum_{\substack{\ud\in\idk^N\\\norm(\ud)\leq1+\log\B}}|\mob(\ud)|+(1+\log\B)\sum_{\substack{\ud\in\idk^N\\\norm(\ud)>1+\log\B}}\frac{|\mob(\ud)|}{\norm(\ud)}\right)\\
&=O(\B(1+\log\B)^{r-2+1/\f+\varepsilon})
\end{split} \end{equation*}
for all $\varepsilon>0$. Apply Proposition \ref{integralevaluation} with $\ud=\uo$. 
\end{proof}

\section{Compatibility with Peyre's conjecture}

We conclude this paper by showing that the leading constant $\asymptconst$ in Theorem \ref{maintheo} satisfies Peyre's conjecture \cite[Conjecture 2.3.1]{MR1340296}. 

We denote by $\adeles$ the ring of adeles of $\K$. 
Let $\X(\adeles)^0$ be the inverse image of 0 under the map 
\begin{equation*}
\X(\adeles)\to\Hom_\Z(H^2_{\text{\it{\'et}}}(\X,\G_m)/H^2_{\text{\it{\'et}}}(\K,\G_m),\Q/\Z)
\end{equation*}
in \cite[Proposition 6.7]{MR1679841}, and let $\overline{\X(\K)}$ be the closure of the diagonal embedding of $\X(\K)$ in $\X(\adeles)$. Since $\X$ satisfies weak approximation by \cite[Theorem 5.1.2]{MR1845760}, the two inclusions
\begin{equation*}
\overline{\X(\K)}\subseteq\X(\adeles)^0\subseteq\X(\adeles)
\end{equation*}
are equalities. 
We recall that since $\X$ is split there is just one isomorphism class of universal torsors over $\X$. By \cite[Remarks 6.13, 7.8]{MR1679841}, Peyre's conjecture coincides then with 
\cite[Conjectures 7.12]{MR1679841}:
\begin{equation}\label{eq:peyre_conj}
\asymptconst=\pey(\X)\tau(\X,\|\cdot\|_\X),
\end{equation}
where $\pey(\X)$ is the constant defined in \cite[\S2]{MR1340296} (cf.~\cite[\S7]{MR1679841}), and by \cite[Theorem 6.19]{MR1679841}, $\tau(\X,\|\cdot\|_\X)$ is the Tamagawa number defined in \cite[Definition 6.18]{MR1679841} 
associated to the class of the universal torsor $\pi:\torsor\to\X$ and to the adelic norm $\|\cdot\|_\X$ for $\X$ that defines $\hX$ 
(cf.~\cite[(10.4)]{MR1679841}).

We recall that $\plf$ denotes the set of finite places of $\K$. For all $\nu\in\plf$, let $\nuintegers$  be the ring of integers of $k_{\nu}$.
Since the model $\wttorsor\to\wtX$ is defined over $\Ok$, \cite[Proposition 9.14(a)]{MR1679841} and  \cite[Proposition 5.20(c)]{MR1679841}\footnote{cf.~\cite[(5.21)]{MR1679841}.} give the following expression for $\tau(\X,\|\cdot\|_\X)$: 
\begin{equation*}
\firstmeasure(\ns^1(\adeles)/\ns(\K))\borel(\X_\ii(\adeles)^0\cap\pi(\torsor(\C)))\prod_{\nu\in\plf}n_\nu(\wttorsor(\nuintegers)),
\end{equation*}
where 
$\ns\cong\G_{m,k}^r$ is the N\'eron-Severi torus of $\X$, that is, the torus dual to the geometric Picard group of $\X$; 
$\ns^1(\adeles)$ is the kernel of the homomorphism
\begin{equation}\label{eq:homomorphismT^1}
\ns(\adeles)\to \Hom(\pic(\X),\R)\cong\R^r, \ \ (x_{i,\nu})_{\substack{1\leq i\leq r\\ \nu\in\pli}}\mapsto\left(\log\prod_{\nu\in\plk}|x_{i,\nu}|_\nu\right)_{1\leq i\leq r}
\end{equation}
(cf.~\cite[(5.18)]{MR1679841}); 
$\firstmeasure$ is the Haar measure on $\ns^1(\adeles)/\ns(\K)$ induced by the Haar measure $\secondmeasure$ on $\ns(\adeles)$ under the bijection established in  \cite[p.~167]{MR1679841}, where 
$\secondmeasure$ is determined by the adelic order norm in \cite[Remarks 5.9(b)]{MR1679841} and by the convergence factors $\beta_\ii=1$, $\beta_\nu=L_\nu(1,\ns)$ for $\nu\in\plf$ defined in the proof of \cite[Lemma 5.16]{MR1679841}; 
$\borel$ is the Borel measure on $\X(\C)$ defined by the adelic norm $\|\cdot\|_\X$; 
$\X_\ii(\adeles)^0$ is the compact open subset of $\X(\C)$ such that $\X(\adeles)^0=\X_\ii(\adeles)^0\times\prod_{\nu\in\plf}\wtX(\nuintegers)$, where $\X(\adeles)^0$ is
defined in \cite[Notation 6.8]{MR1679841};
$n_\nu$ is the Borel measure on $\torsor(\K_\nu)$ defined by the $\nu$-adic norm $\|\cdot\|_{\X\to\torsor}$ of \cite[Theorem 5.17]{MR1679841}. 

We are ready to show that the constant \eqref{eq:toric_asymptconst} in Theorem \ref{maintheo} satisfies \eqref{eq:peyre_conj}.
\begin{prop}\label{prop:peyre_constant}
With the notation above\\
\begin{enumerate}[label=(\roman{*}), ref=\it{(\roman{*})}]

\item $\quad\firstmeasure(\ns^1(\adeles)/\ns(\K))=(2\pi \ncl\crOk^{-1})^r$;\\\label{const1}

\item $\quad \borel(\X_\ii(\adeles)^0\cap\pi(\torsor(\C)))=(2\pi)^{N-r}\#\maxc$;\\\label{const2}

\item$\quad
\prod_{\nu\in\plf}n_\nu(\wttorsor(\nuintegers))=\ka\disck^{-N/2}$.\\\label{const3}

\end{enumerate}
\end{prop}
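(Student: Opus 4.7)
The plan is to treat each identity as an independent computation, relying on: the splitness of $\ns\cong\G_{m,k}^r$ for \ref{const1}; Salberger's cell decomposition of $\X(\C)$ indexed by $\maxc$ for \ref{const2}; and the open embedding \eqref{eq:open_subset} combined with the M\"obius identity from \S\ref{subsec:generalized_moebius_function} for \ref{const3}. Throughout, $\K$ being imaginary quadratic means $\pli$ is a single complex place, so infinite-place factors will be clean.

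For \ref{const1}, I would exploit the splitness to factor $\ns(\adeles)\cong\G_m(\adeles)^r$ and compatibly the homomorphism in \eqref{eq:homomorphismT^1} as an $r$-fold product of the standard log-norm on $\G_m$. Because $\Theta_\ii$ is defined by the order norm on $\ns$ and the convergence factors $\beta_\nu=L_\nu(1,\ns)=(1-\norm(\p)^{-1})^{-r}$, the measure $\firstmeasure$ splits as an $r$-fold product of the Tamagawa-type measure on $\G_m^1(\adeles)/\K^\times$. This quotient measure is computed via the class number formula
\[
\text{Res}_{s=1}\zeta_\K(s)=\frac{2\pi\ncl}{\crOk\sqrt{\disck}},
\]
(using $R_\K=1$ and $r_2=1$), and the $\sqrt{\disck}$ is absorbed by the normalization of the order norm at the infinite place (cf.~\cite[Remarks 5.9(b)]{MR1679841}), yielding the single-factor value $2\pi\ncl/\crOk$.

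For \ref{const2}, the key observation is that $\X_\ii(\adeles)^0=\X(\C)$ (weak approximation, plus that $\X$ is split so there is no Brauer obstruction), and $\pi(\torsor(\C))$ differs from $\X(\C)$ by a measure-zero union of invariant divisors. I would use the partition of $\lattR$ by the maximal cones of $\fan$, pulled back via $\partitionfunction$, to write $\X(\C)$ up to measure zero as $\bigsqcup_{\sigma\in\maxc}\Csigma^\circ$. On each cell, passing to polar coordinates $(r_i,\theta_i)$ on $\torus(\C)\cong(\C^\times)^{\dim\X}$, the angular integration contributes $(2\pi)^{\dim\X}=(2\pi)^{N-r}$, while the radial integration in log-coordinates, by the very definition of $\hX$ in terms of $\sup_\sigma|\ux^{\can(\sigma)}|_\ii$ (Remark \ref{rem:Csigma}), reduces to the integral of $\exp$ over the linear dual of $-\sigma$, which evaluates to $1$. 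Summing over the $\#\maxc$ cones gives the identity.

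For \ref{const3}, the local measure $n_\nu$ on $\torsor(\K_\nu)\subseteq\A^N(\K_\nu)$ is (by the normalization of $\|\cdot\|_{\X\to\torsor}$) the product Haar measure for which $\nuintegers^N$ has measure equal to the local discriminant factor $\norm(\mathfrak{p})^{-v_\nu(\truedisck)\cdot N/2}$. Using \eqref{eq:open_subset}, the integral becomes
\[
n_\nu(\wttorsor(\nuintegers))=\int_{\nuintegers^N}\chi(\ub(\ux))\,\dd\mu_\nu,
\]
where $\ub(\ux)_\rho=x_\rho\nuintegers$. Expanding $\chi=\sum_{\ud}\mob(\ud)\chi_\ud$ and using $\int_{\nuintegers^N}\chi_{\ud_\nu}\,\dd\mu_\nu=\norm(\ud_\nu)^{-1}\cdot(\text{local disc factor})$, one obtains the local Euler factor $\partsum\mob(\ud)/\norm(\ud)$. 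Taking the product over $\nu\in\plf$ and applying the conductor-discriminant identity $\prod_\nu\norm(\mathfrak{p})^{-v_\nu(\truedisck)N/2}=\disck^{-N/2}$ together with the definition \eqref{kappa} of $\ka$ completes the proof.

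The main obstacle is \ref{const1}: tracking how the convergence factors, the discriminant normalization at $\ii$, and the short exact sequence $1\to\ns^1(\adeles)\to\ns(\adeles)\to\R^r\to0$ combine to yield exactly $(2\pi\ncl/\crOk)^r$ without stray factors of $\sqrt{\disck}$ requires a careful reconciliation of Salberger's conventions in \cite[\S5]{MR1679841} with the classical Tamagawa formula. Parts \ref{const2} and \ref{const3} are essentially routine once the right decompositions are in place.
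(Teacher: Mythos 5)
Your proposal reaches the right conclusions by essentially the same structure, but the paper handles part \ref{const1} more cleanly than your sketch: rather than factoring $\ns(\adeles)\cong\G_m(\adeles)^r$ by hand and tracking where the $\sqrt{\disck}$ cancels, the paper invokes Ono's Tamagawa number formalism for algebraic tori. Specifically, after identifying the convergence factors $\beta_\nu$ with Ono's canonical correcting factors (Salberger's Lemma 5.16), the quantity $\firstmeasure(\ns^1(\adeles)/\ns(\K))$ is recognized as $c(\secondmeasure;\widehat{\ns})=\disck^{r/2}\gamma(\ns;\K/\K)$, and the splitness of $\ns$ then gives $\gamma(\ns;\K/\K)=(\mathrm{Res}_{s=1}\zeta_\K(s))^r=(\disck^{-1/2}2\pi\ncl\crOk^{-1})^r$, so the $\disck^{r/2}$ prefactor cancels the discriminant from the class number formula \emph{automatically}. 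This neatly resolves the ``main obstacle'' you flag; your direct-factorization plan is not wrong, but it is doing the work that Ono's formula already packages. For \ref{const2}, the paper shows $\X_\ii(\adeles)^0=\X(\C)$ (weak approximation and properness) and $\pi(\torsor(\C))=\X(\C)$ (the latter by Colliot-Th\'el\`ene--Sansuc, since the torsor is trivial over $\C$, not merely equal up to a measure-zero set as you wrote), and then simply cites Salberger's Proposition 9.16 for $\borel(\X(\C))=(2\pi)^{N-r}\#\maxc$ rather than redoing the cell decomposition and polar-coordinate integral. For \ref{const3}, the paper computes via the finite-field point count $n_\nu(\wttorsor(\nuintegers))=\#\wttorsor(\F_\p)\bigl(\mu_\nu(\nuintegers)/\#\F_\p\bigr)^N$ (Salberger's Corollary 2.15 and Proposition 9.14) and then M\"obius-expands $\#\wttorsor(\F_\p)$; your direct $p$-adic integral of $\chi$ over $\nuintegers^N$ is the same computation phrased integrally, and both arrive at the local Euler factor $\partsum\mob(\ud)/\norm(\ud)$ with $\prod_\nu\mu_\nu(\nuintegers)=\disck^{-1/2}$.
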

\begin{proof}
We first prove \ref{const1}. According to \cite[p.~167]{MR1679841},  $\torus(\K)$ is endowed with the counting measure, and $\ns(\adeles)/\ns^1(\adeles)$ is endowed with the Haar measure pullback of the usual Lebesgue measure on $\R^r$ under the isomorphism 
induced by \eqref{eq:homomorphismT^1}. By \cite[3.28--3.30]{MR1679841} and \cite[Theorem 4.14]{MR1679841}, 
\begin{equation*}
\secondmeasure=\prod_{\nu\in\plk}\beta_\nu\omega_\nu
\end{equation*} where $\omega_\nu$ are the local Haar measures on $\ns(\K_\nu)$ canonically induced by an invariant differential form of degree $r$ on $\ns$ as in \cite[\S2.2]{MR670072} 
(cf.~the lines before Theorem 4.14 in \cite{MR1679841}).
By \cite[Lemma 5.16]{MR1679841} the convergence factors $\beta_\nu$ coincide with the canonical correcting factors for $\ns$ defined in \cite[\S3.3]{MR0124326}. 
Then, 
\begin{equation*}
\firstmeasure(\ns^1(\adeles)/\ns(\K))=c(\secondmeasure; \widehat{\ns})=\disck^{r/2}\gamma(\ns;\K/\K)\end{equation*}
by \cite[(3.2.1)]{MR0124326}, where $c(\secondmeasure; \widehat{\ns})$ and $\gamma(\ns;\K/\K)$ are the numbers defined in  \cite[\S3.2]{MR0124326} and \cite[\S3.5]{MR0124326}, respectively.
We recall that $\ns\cong\G_{m,\K}^r$ is a split torus. Therefore, denoting by $\zeta_\K$ the Dedekind zeta function of $\K$,
\begin{equation*}
\gamma(\ns;\K/\K)=\gamma(\G_{m,\K};\K/\K)^r=\left(\lim_{s\to1}(s-1)\zeta_\K(s)\right)^r=(\disck^{-1/2}2\pi\ncl\crOk^{-1})^r
\end{equation*}
by  \cite[Proposition 3.5.1, Theorem 3.5.1]{MR0124326} and \cite[Corollary to Theorem VII.6.3]{MR0234930}.

To prove \ref{const2},
we recall that $\X(\adeles)^0=\X(\adeles)$, and that $\X(\adeles)=\X(\C)\times\prod_{\nu\in\plf}\wtX(\nuintegers)$ as $\X$ is proper. 
Hence, $\X_\ii(\adeles)^0=\X(\C)$. 
Moreover, $\pi(\torsor(\C))=\X(\C)$ by \cite[(2.7.2)]{MR899402}, and $\borel(\X(\C))=(2\pi)^{N-r}\#\maxc$ by \cite[Proposition 9.16]{MR1679841}.

We now prove \ref{const3}. Let $\nu\in\plf$ and denote by $\p$ the corresponding prime ideal of $\Ok$.
By \cite[Corollary 2.15, Proposition 9.14]{MR1679841},
\begin{equation*}
n_\nu(\wttorsor(\nuintegers))=\#\wttorsor(\F_{\p})\left(\frac{\mu_\nu(\nuintegers)}{\#\F_{\p}}\right)^N
\end{equation*}
where $\mu_{\nu}$ is the Haar measure on the additive locally compact group $k_{\nu}$ normalized such that $\mu_\nu(\nuintegers)\mu_\nu(\mathcal{O}_\nu^D)=1$. Here $\mathcal{O}_\nu^D$ is the inverse different of $\nuintegers$.

For $\ux\in\F_\p^N$, let $\ux\Ok:=(x'_\rho\Ok)_{\rho\in\ray}$, where $(x'_\rho)_{\rho\in\ray}\in\Ok^N$ is a representative of the class $\ux\in(\Ok/\p)^N$. 
Then, with the notation of Subsection \ref{subsec:generalized_moebius_function},
\begin{equation*}
\#\wttorsor(\F_\p)=\sum_{\ux\in\F_\p^N}\chi((\ux\Ok)_\p)=\sum_{\ux\in\F_\p^N}\partsum\mob(\ud)\chi_{\ud}(\ux\Ok). 
\end{equation*}
Since $\mob((\p^{\exponent_{\rho}})_{\rho\in\ray})=0$ for all $N$-tuples of non-negative integers
$(\exponent_{\rho})_{\rho\in\ray}$ such that $\exponent_{\rho}\geq2$ for some $\rho\in\ray$, we can replace $\partsum$ by the sum $\sum_{\ud,\p}'$ running through the set of $\ud=(\p^{\exponent_{\rho}})_{\rho\in\ray}$ with $\exponent_\rho\in\{0,1\}$ for all $\rho\in\ray$.

Let $\rho\in\ray$. If $\did_\rho=\Ok$, then $\did_\rho\mid x'_\rho\Ok$ for all $\ux\in\F_\p^N$.  If $\did_\rho=\p$, then $\did_\rho\mid x'_\rho\Ok$ if and only if $x_\rho=0$ in $\F_\p$. Hence, given $\ud=(\p^{\exponent_{\rho}})_{\rho\in\ray}$ with $\exponent_\rho\in\{0,1\}$, 
\begin{equation*}
\sum_{\ux\in\F_\p^N}\chi_{\ud}(\ux\Ok)=\#\F_\p^{\{\rho\in\ray:\exponent_\rho=0\}}=\#\F_\p^N\norm(\ud)^{-1}.
\end{equation*} 
Then,
\begin{equation*}
n_\nu(\wttorsor(\nuintegers))=\mu_\nu(\nuintegers)^N\partsum\frac{\mob(\ud)}{\norm(\ud)}.
\end{equation*}
Moreover, $\prod_{\nu\in\plf}\mu_\nu(\nuintegers)=\disck^{-1/2}$
because of the normalization of the measures $\mu_\nu$, and 
 $\ka=\prod_{\p}\partsum\frac{\mob(\ud)}{\norm(\ud)}$ by \eqref{kappa}.
\end{proof}

\subsection*{Acknowledgements}
The author was supported by grant
  DE 1646/3-1 of the Deutsche Forschungsgemeinschaft.

\bibliographystyle{personal}

\bibliography{manin_toric}

\end{document}